\newcommand{\mb}{\mathbb}
   \def \r{\rho}
\theoremstyle{definition}
\newtheorem{theorem}{Theorem}[section]
\newtheorem{lemma}[theorem]{Lemma}
\newtheorem{definition}[theorem]{Definition}
\newtheorem{remark}[theorem]{Remark}
\newtheorem{proposition}[theorem]{Proposition}
\newtheorem{claim}[theorem]{Claim}
\numberwithin{equation}{section}
\renewcommand*{\backref}[1]{}
\renewcommand*{\backrefalt}[4]{\quad \tiny
  \ifcase #1 (\textbf{NOT CITED.})%
  \or    (Cited on Section~#2.)%
  \else   (Cited on Section~#2.)%
  \fi}
\def\MRbibitem{\@ifnextchar[\my@lbibitem\my@bibitem}
\def\mybiblabel#1#2{\@biblabel{{\hyperref{http://www.ams.org/mathscinet-getitem?mr=#1}{}{}{#2}}}}
\def\myhyperanchor#1{\Hy@raisedlink{\hyper@anchorstart{cite.#1}\hyper@anchorend}}
\def\my@lbibitem[#1]#2#3#4\par{%
  \item[\mybiblabel{#2}{#1}\myhyperanchor{#3}\hfill]#4%
  \@ifundefined{ifbackrefparscan}{}{\BR@backref{#3}}%
  \if@filesw{\let\protect\noexpand\immediate
    \write\@auxout{\string\bibcite{#3}{#1}}}\fi\ignorespaces%
}
\def\my@bibitem#1#2#3\par{%
  \refstepcounter\@listctr
  \item[\mybiblabel{#1}{\the\value\@listctr}\myhyperanchor{#2}\hfill]#3%
  \@ifundefined{ifbackrefparscan}{}{\BR@backref{#2}}%
  \if@filesw\immediate\write\@auxout
    {\string\bibcite{#2}{\the\value\@listctr}}\fi\ignorespaces%
}
\begin{document}

\title[]{Invariant tori for a class of affined Anosov mappings with quasi-periodic forces}

\date{}

\author{Xinyu Bai} \address[Xinyu Bai] {School of Mathematics\\
Sichuan University \\
  Chengdu, Sichuan, 610016, China}
\email[X. Bai]{23210180080@m.fudan.edu.cn}

\author{Zeng Lian} \address[Zeng Lian] {School of Mathematics\\
Sichuan University \\
  Chengdu, Sichuan, 610016, China}
\email[Z. Lian]{lianzeng@scu.edu.cn, zenglian@gmail.com}

\author{Xiao Ma} \address[Xiao Ma] {CAS Wu Wen-Tsun Key Laboratory of Mathematics, School of Mathematical Sciences \\
University of Science and Technology of China \\
  Hefei, Anhui, China}
\email[X. Ma]{xiaoma@ustc.edu.cn}

\author{Hang Zhao} \address[Hang Zhao] {School of Mathematics\\
Sichuan University \\
  Chengdu, Sichuan, 610016, China}
\email[H. Zhao]{hangzhaoscu@163.com}

\subjclass[2020]{Primary: 37D20, Secondary: 37C35}

\keywords{affined Anosov mappings, quasi-periodic force, invariant torus, topological entropy.}

\begin{abstract} In this paper, we consider a class of  affined Anosov mappings with quasi-periodic forces, and show that there is a unique positive integer $m$, which only depends on the system, such that the exponential growth rate of the cardinality of invariant tori of degree $m$ is equal to the topological entropy.
\end{abstract}
\maketitle


\section{Introduction}
\subsection{Background and motivation}
To describe the complexity of  dynamical systems, which is one of the most important subjects in the study of dynamical systems,  mathematicians introduced lots of indices, such as Lyapunov exponents, entropy, Smale horseshoe et al.
In 1965, Adler, Konheim, and McAndrew\cite{AKM65} introduced the concept of topological entropy using open covers. The widely accepted definition of topological entropy for systems on metric spaces, provided by Bowen\cite{Bow68, Bow711}, uses separating sets and spanning sets to describe the exponential growth rate of the number of separated orbit segments. 
The distribution properties of periodic orbits play an essential role in studying the complexity of dynamical systems. 
For Axiom A maps and flows, Bowen\cite{Bow712, Bow72} proved the denseness of periodic orbits  and  its exponential growth rate of the cardinality is equal to the topological entropy. 
For non-uniformly hyperbolic systems, Katok\cite{Kat80} proved that the exponential growth rate of periodic points can approximate any given hyperbolic measures in terms of metric entropy. 

Non-autonomous and random dynamical systems often lack periodic orbits due to the absence of recurrence, which creates one of the most significant difficulties in establishing the conclusions of classical dynamical systems in these systems. 
In 2001, Klünger\cite{Kl01} introduced the concept of random periodic points. 
Zhao and Zheng\cite{ZZ09} proved the existence of random periodic solutions for a class of stochastic differential equations. 
Recently, Huang, Lian, and Lu\cite{HLL} proved the existence and the density of random periodic orbits in Anosov systems driven by quasi-periodic forces. 
In particular, they pointed out that when the period is sufficiently large, the cardinality of random periodic orbits will become uncountable. 
Therefore, its exponential growth rate cannot be related to the system's topological entropy. 
It is worth noting that these random periodic points are Borel measurable mappings from the sample space (base space) to the phase space (fiber). 
In \cite{HLL},  they also showed the failure of the mechanism of generating invariant tori through continuous random periodic point(Lemma 7.2 in \cite{HLL}). 
However, for an affined Arnold's cat map driven by a quasi-periodic force, they gave some conditions to assure the existence of an invariant torus, which sparked our interest in discovering the relationship between the exponential growth rate of the cardinality of invariant tori and the topological entropy.

\subsection{Setting and results}
In this paper, we consider the skew-product system driven by an irrational rotation on $\mathbb{T}:=\mathbb{R}/ \mathbb{Z}$.

Let $R_{\alpha}: \mathbb{T}\to \mathbb{T}, \omega\mapsto \omega+\alpha \mod 1$, $\alpha\in \mathbb{R}\setminus \mathbb{Q}$ be the irrational rotation on $\mathbb{T}$, $C(\mathbb{T},\mathbb{T}^2)$ be the space of continuous mappings from $\mathbb{T}$ to $\mathbb{T}^2$.
We consider the system $(\mathbb{T}\times\mathbb{T}^2, \varphi)$ as follows. 
\begin{equation}\label{system0}
\begin{aligned}
\varphi: &\mathbb{T}\times \mathbb{T}^2 \to  \mathbb{T}\times \mathbb{T}^2\\
&(\omega, x)\mapsto (R_{\alpha}(\omega),f_{\omega}(x)),
\end{aligned}
\end{equation}
where $f: \mathbb{T}\to C^{1}(\mathbb{T}^2, \mathbb{T}^2)$ and $f_{\omega}:=f(\omega)$ for convenience.

In this paper, we consider  the following type of invariant structures of $(\mathbb{T}\times \mathbb{T}^2,\varphi)$.
\begin{definition}[$\varphi^n$-invariant torus]\label{def1}
For the system $(\mathbb{T}\times \mathbb{T}^2,\varphi)$ as in \eqref{system0}, if there are $m, n\in \mathbb{N}$ and a continuous map $g: \mathbb{R}\to\mathbb{T}^2$, such that for any $\omega\in \mathbb{R}$, one has
\begin{itemize}
\item[(1)] $g(\omega)=g(\omega + km)$ for any $k\in \mathbb{Z}$;
\item[(2)] $ \varphi^n(\omega \mod 1, g(\omega) )= (\omega+n\alpha \mod 1, g(\omega+n\alpha ));$
\item[(3)] $\{g(\omega),\dots, g(\omega+m-1)\}$ are pairwise distinct.
\end{itemize}
The graph of a multi-valued map $$g_{\mathcal{T}}: \mathbb{T}\to \mathbb{T}^2, \omega\mapsto \left\{g(\omega+i): i\in \{0,\dots, m-1\}\right\}$$  is called a {\bf $\varphi^n$-invariant torus of degree $m$}, and denoted by $$\mathcal{T}:=\{(\omega,g_{\mathcal{T}}(\omega)): \forall \omega\in \mathbb{T}\}.$$
\end{definition}
Throughout this paper, the constant $m\in \mathbb{N}$ is called the degree of $\mathcal{T}$, and denoted by $deg({\mathcal{T}})$. 
We denote by $\mathcal{G}(\varphi)$ the collection of invariant tori of $(\mathbb{T}\times \mathbb{T}^2, \varphi)$, and $\mathcal{G}(\varphi; n,m)$ the collection of all $\varphi^k(k\leq n)$-invariant tori of degree $m$. 

In this paper, we consider affined Anosov mappings with quasi-periodic forces:
\begin{equation}\label{syst1}
\begin{aligned}
\varphi: &\mathbb{T}\times \mathbb{T}^2 \to  \mathbb{T}\times \mathbb{T}^2\\
&(\omega, x)\mapsto (R_{\alpha}(\omega), Ax+ h(\omega)),
\end{aligned}
\end{equation}
where
$h(\omega)= \begin{pmatrix}
h_1(\omega)\\
h_2(\omega)\end{pmatrix} \in C(\mathbb{T}, \mathbb{T}^2)$, the matrix $A\in GL(2, \mathbb{Z})$ is hyperbolic, and $GL(2, \mathbb{Z})$ is the general linear group of order 2 over  $\mathbb{Z}$. 

Note that there are two kinds of degrees in this paper, the first is the degree of the invariant torus we defined in Definition \ref{def1}; and the second is the degree of a continuous mapping $f:\mathbb{T}\to \mathbb{T}$, which is denoted by $deg(f)$.
Additionally, $I-A$ is invertible as $A\in GL(2, \mathbb{Z})$ is hyperbolic. 
Throughout this paper, we use $\sharp$ to represent the cardinality of the corresponding set.
\begin{theorem}\label{thm1}
For $\varphi: \mathbb{T}\times \mathbb{T}^2 \to \mathbb{T}\times \mathbb{T}^2 $ as in \eqref{syst1}, let $m\in \mathbb{N}$ be the smallest positive integer satisfying 
$$m\cdot (I-A)^{-1}\begin{pmatrix} deg(h_1)\\ deg(h_2)\end{pmatrix}\in \mathbb{Z}^2.$$   
Then, one has
\begin{itemize}
\item[(1)] $\mathcal{G}(\varphi)=\cup_{n\in \mathbb{N}}\mathcal{G}(\varphi;n,m)$;
\item[(2)] $\sharp \mathcal{G}(\varphi;n,m)<+\infty$ for all $n\in \mathbb{N}$;
\item[(3)] $ \lim_{n\rightarrow +\infty}\frac1n \log \sharp\mathcal{G}(\varphi; n,m)=h_{top}(\varphi).$
\end{itemize}
\end{theorem}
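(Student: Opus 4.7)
The plan is to reduce everything to a cohomological equation at the level of lifts to $\mathbb{R}^2$. Choose continuous lifts $\tilde{h}\colon\mathbb{R}\to\mathbb{R}^2$ of $h$ (so $\tilde{h}(\omega+1)-\tilde{h}(\omega)\equiv d:=(\deg(h_1),\deg(h_2))^{\top}$) and $\tilde{g}\colon\mathbb{R}\to\mathbb{R}^2$ of any candidate $g$ of degree $m'$ (so $\tilde{g}(\omega+m')-\tilde{g}(\omega)\equiv v\in\mathbb{Z}^2$). Condition (2) lifts uniquely, up to an additive integer vector, to
\[
\tilde{g}(\omega+n\alpha)=A^n\tilde{g}(\omega)+\sum_{k=0}^{n-1}A^{n-1-k}\tilde{h}(\omega+k\alpha)+c,\qquad c\in\mathbb{Z}^2.
\]
Substituting $\omega\mapsto\omega+m'$, using $\sum_{k=0}^{n-1}A^{n-1-k}=(I-A^n)(I-A)^{-1}$ and invertibility of $I-A^n$ (from hyperbolicity of $A$), one extracts the algebraic identity $v=m'(I-A)^{-1}d$. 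Integrality of $v$ then forces $m\mid m'$, where $m$ is the minimal integer in the statement.

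To upgrade $m\mid m'$ to $m=m'$ and prove Part (1), set $\delta(\omega):=\tilde{g}(\omega+m)-\tilde{g}(\omega)$; a direct calculation shows that $\eta:=\delta-m(I-A)^{-1}d$ satisfies the homogeneous equation $\eta(\omega+n\alpha)=A^n\eta(\omega)$. Decomposing $\mathbb{R}^2=E^s\oplus E^u$ along the hyperbolic splitting of $A$ and evaluating at extrema of $\|\eta^s\|$ and $\|\eta^u\|$, the iterates $\eta^u(\omega_0+kn\alpha)=A^{kn}\eta^u(\omega_0)$ and $\eta^s(\omega_0-kn\alpha)=A^{-kn}\eta^s(\omega_0)$ force $\eta\equiv 0$, so $g$ is genuinely $m$-periodic. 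Condition (3) is then also automatic: a coincidence $g(\omega_0+i_0)=g(\omega_0+i_1)$ with $0\le i_0<i_1<m$ would propagate along the dense $R_\alpha^n$-orbit (since the cocycle sum is $1$-periodic mod $\mathbb{Z}^2$), giving $g$ an everywhere-valid period $i_1-i_0<m$, contradicting the minimality of $m$.

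For Part (2), write $\tilde{g}(\omega)=v\omega/m+\tilde{g}_0(\omega)$ with $\tilde{g}_0$ being $m$-periodic; the $\omega$-linear terms cancel precisely because $v=m(I-A)^{-1}d$, reducing the lifted equation on $\mathbb{R}/m\mathbb{Z}$ to
\[
\tilde{g}_0(\omega+n\alpha)=A^n\tilde{g}_0(\omega)+H_n(\omega)+c',
\]
with $H_n$ continuous and $m$-periodic. Splitting once more along $E^s\oplus E^u$, convergent geometric series (forward iteration on the contracting side, backward on the expanding) produce a unique continuous $\tilde{g}_0$ for each $c$, and two integer choices yield the same $g\bmod\mathbb{Z}^2$ iff $c-c'\in(I-A^n)\mathbb{Z}^2$. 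Hence there are $|\det(I-A^n)|$ distinct solutions $g$; quotienting by the free cyclic shift $g\mapsto g(\cdot+1)$ of order $m$ yields $|\det(I-A^n)|/m$ distinct $\varphi^n$-invariant tori of degree $m$, which is finite and gives Part (2).

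For Part (3), the sandwich $|\det(I-A^n)|/m\le\sharp\mathcal{G}(\varphi;n,m)\le\sum_{k=1}^{n}|\det(I-A^k)|/m$, combined with $|\det(I-A^n)|=|(1-\lambda^n)(1-\mu^n)|\sim|\lambda|^n$ (eigenvalues $|\lambda|>1>|\mu|$, using $|\det A|=1$), gives $\lim_n\tfrac1n\log\sharp\mathcal{G}(\varphi;n,m)=\log|\lambda|$. To identify this limit with $h_{top}(\varphi)$, I would solve the conjugacy equation $A\psi(\omega)+h(\omega)=\psi(\omega+\alpha)$ on the $m$-cover (feasible exactly because $m(I-A)^{-1}d\in\mathbb{Z}^2$), producing an $m$-to-$1$ semiconjugacy between $\varphi$ and the product $R_\alpha\times A$ on $\mathbb{T}_m\times\mathbb{T}^2$, so $h_{top}(\varphi)=h_{top}(R_\alpha)+h_{top}(A)=\log|\lambda|$. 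The main obstacle is the rigidity step of the second paragraph: ruling out spurious invariant tori of strictly larger degree than $m$ hinges on the delicate interplay between hyperbolicity of $A$ and minimality of the quasi-periodic forcing, with only continuity (not smoothness) of $g$ available.
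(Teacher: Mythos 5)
Your proposal is correct and reaches all three conclusions, but it takes a meaningfully different and, in places, more streamlined route than the paper. The paper splits the existence proof into two stages: first it solves the cohomological equation by Fourier series when $h\in C^2$ (invoking Lemma 7.3 of \cite{HLL} to guarantee uniform convergence of the resulting Fourier series), then it handles general $h\in C^0$ by writing $h=\tilde h+r$ with $\tilde h\in C^2$ and $\|r\|_{C^0}<1$ and solving a second cohomological equation for the small remainder $r$ via the hyperbolic splitting and geometric series (Lemma \ref{prop4.2}). You notice, correctly, that the geometric-series argument on $E^s\oplus E^u$ already gives a continuous, uniformly bounded, $1$-periodic solution $\tilde g_0$ for arbitrary continuous inhomogeneity once the linear part $v\omega/m$ with $v=m(I-A)^{-1}d\in\mathbb{Z}^2$ has been subtracted off; the $C^2$/Fourier detour is therefore unnecessary and your one-step construction is a genuine simplification. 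Your rigidity argument (set $\eta(\omega)=\tilde g(\omega+m)-\tilde g(\omega)-m(I-A)^{-1}d$, note it satisfies the homogeneous equation $\eta(\omega+n\alpha)=A^n\eta(\omega)$, and use boundedness plus hyperbolicity to force $\eta\equiv 0$) is exactly the mechanism behind the paper's Lemma \ref{lem331}, just expressed on the level of lifts instead of via path-connectedness of images and density of stable/unstable manifolds. Your explicit count --- $|\det(I-A^n)|$ random periodic points indexed by $c\in\mathbb{Z}^2/(I-A^n)\mathbb{Z}^2$, then quotiented by the free order-$m$ cyclic shift $g\mapsto g(\cdot+1)$ --- is cleaner than the paper's chain of equalities relating $\sharp\mathcal{G}(\varphi;n,m)$ to $\sharp P(A;n)$ through the auxiliary sets $\mathcal{A}^k_0$ and $\mathcal{A}^k$; indeed, a careful reading of the paper's \eqref{eqn344} suggests it silently drops this factor of $m$ (each degree-$m$ torus corresponds to $m$ distinct random periodic points of $\varphi_m$), which of course does not affect the exponential growth rate. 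For part (3) the two proofs converge: your semiconjugacy on the $m$-fold cover is the same structure as the paper's pair of maps $\mathcal{K}_m$ (finite-to-one, giving $h_{top}(\varphi)=h_{top}(\varphi_m)$) and $T_g$ (a genuine conjugacy to the homogeneous system, giving $h_{top}(\varphi_m)=h_{top}(\varphi_{m,0})=\log|\lambda|$). One small inaccuracy worth noting: you describe $\tilde g_0$ as ``$m$-periodic,'' but the series you write down produces a $1$-periodic $\tilde g_0$ because the inhomogeneity $H_n$ is $1$-periodic; this is harmless (indeed it is the reason the degree of the resulting torus is exactly $m$ and not a proper multiple), but the phrasing should be tightened. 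Finally, the ``main obstacle'' you flag at the end is not actually an obstacle: the rigidity step only needs $\eta$ bounded and continuous, which you already have, so the concern about having only $C^0$ regularity is unfounded.
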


To get the above theorem, we firstly show the existence of a $\varphi$-invariant torus in Sect. \ref{sect2}. Then, we prove Theorem \ref{thm1} in Sect. \ref{sect4} through some properties of the homogeneous system  in Sect. \ref{sect3}.

\section{Existence of a $\varphi$-invariant torus}\label{sect2}
In this section, we show that there is a $\varphi$-invariant torus of a particular degree.
\begin{proposition}\label{prop2.1.1}
For system $( \mathbb{T}\times \mathbb{T}^2, {\varphi})$ as in \eqref{syst1}, let $m$ be the smallest positive integer such that
\begin{equation}
\label{e3}
m\cdot (I-A)^{-1}\left(\begin{matrix}deg({h}_1)\\deg({h}_2)\end{matrix} \right)\in\mathbb{Z}^2.
\end{equation} 
Then, there exists a ${\varphi}$-invariant torus $\mathcal{T}$ of $deg(\mathcal{T})=m$.
\end{proposition}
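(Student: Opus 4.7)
The plan is to build $\mathcal T$ as the $\mathbb T^2$-projection of a continuous lift $\tilde g:\mathbb R\to\mathbb R^2$ of the form $\tilde g(\omega)=\tfrac{\omega}{m}v+G(\omega)$ for some continuous $1$-periodic correction $G$. Fix a continuous lift $\tilde h:\mathbb R\to \mathbb R^2$ of $h$, so $\tilde h(\omega+1)-\tilde h(\omega)=d:=(\deg(h_1),\deg(h_2))^{T}$, and write $\tilde h(\omega)=\omega d+\tilde h_0(\omega)$ with $\tilde h_0$ continuous and $1$-periodic. By the defining property of $m$ in \eqref{e3}, the vector $v:=m(I-A)^{-1}d$ lies in $\mathbb Z^2$, so $\tilde g(\omega+m)-\tilde g(\omega)=v$ projects to the periodicity required by item~(1) of Definition~\ref{def1}. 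Substituting the ansatz into the invariance identity $A\tilde g(\omega)+\tilde h(\omega)=\tilde g(\omega+\alpha)$, the $\omega$-linear terms cancel thanks to $(I-A)v=md$, leaving the cohomological equation
\[G(\omega+\alpha)-AG(\omega)=\tilde h_0(\omega)-\tfrac{\alpha}{m}v.\]

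To solve this equation, I use the hyperbolic splitting $\mathbb R^2=E^s\oplus E^u$ determined by $A\in GL(2,\mathbb Z)$, on which $A|_{E^s}$ and $(A|_{E^u})^{-1}$ are contractions. Decomposing the right-hand side into its stable and unstable components and iterating the equation forward on the stable part and backward on the unstable part produces absolutely and uniformly convergent geometric-type series for $G^s$ and $G^u$, each $1$-periodic because the right-hand side is. Setting $G:=G^s+G^u$ yields the required correction, and item~(2) of Definition~\ref{def1} follows by construction.

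The delicate step is item~(3). For each $1\le l\le m-1$, define $\phi_l(\omega):=\tilde g(\omega)-\tilde g(\omega+l)=-\tfrac{l}{m}v+G(\omega)-G(\omega+l)$, a continuous $1$-periodic $\mathbb R^2$-valued map. Subtracting the invariance identity at $\omega$ from that at $\omega+l$ yields the recursion $\phi_l(\omega+\alpha)=A\phi_l(\omega)-ld$; since $A\in GL(2,\mathbb Z)$ and $d\in\mathbb Z^2$, whenever $\phi_l(\omega_0)\in\mathbb Z^2$ for some $\omega_0$ one has $\phi_l(\omega_0+n\alpha)\in\mathbb Z^2$ for all $n\ge 0$. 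The orbit $\{\omega_0+n\alpha\}$ is dense in $\mathbb T$, so by continuity and closedness $\phi_l(\omega)\in\mathbb Z^2$ for every $\omega\in\mathbb R$; discreteness of $\mathbb Z^2$ together with connectedness of $\mathbb R$ then force $\phi_l$ to be a constant integer vector $n_0$. Averaging $\phi_l$ over $[0,1]$ eliminates the $1$-periodic part and leaves $n_0=-\tfrac{l}{m}v$, equivalently $l(I-A)^{-1}d\in\mathbb Z^2$ with $1\le l<m$, contradicting the minimality of $m$. Hence $\phi_l(\omega)\notin \mathbb Z^2$ for every admissible $l$ and every $\omega$, which is exactly item~(3). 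This last step is the main obstacle, as it is the only place where both the irrationality of $\alpha$ (to propagate a hypothetical coincidence along a dense orbit) and the minimality of $m$ (to convert that coincidence into an integrality contradiction) are simultaneously needed.
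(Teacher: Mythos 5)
Your proof is correct, and it takes a genuinely cleaner route than the paper's. The paper splits the proof into three pieces: Lemma~\ref{prop2} solves the cohomological equation for $C^{2}$ forcing via Fourier series (the $C^{2}$ regularity being needed only to make the Fourier coefficients decay fast enough, a step that cites Lemma~7.3 of~\cite{HLL}); Lemma~\ref{prop4.2} solves a second cohomological equation for a small $C^{0}$ perturbation $r$ with $\deg(r)=0$ via the stable/unstable splitting series; and then Proposition~\ref{prop2.1.1} is obtained by $C^{0}$-approximating a general continuous $h$ by a $C^{2}$ one and adding the correction from Lemma~\ref{prop4.2}. You instead extract the $\omega$-linear part from the lift $\tilde h$ up front, so that the residual cohomological equation $G(\omega+\alpha)-AG(\omega)=\tilde h_{0}(\omega)-\tfrac{\alpha}{m}v$ has a bounded continuous right-hand side, and then apply the splitting-series method of Lemma~\ref{prop4.2} \emph{directly} at the $C^{0}$ level. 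This eliminates both the Fourier-series step and the approximation step: the paper's Lemma~\ref{prop4.2} argument, once the linear part is removed, is already strong enough to handle the whole problem. You also give an explicit argument for item~(3) (pairwise distinctness), by showing that an integer coincidence $\phi_{l}(\omega_{0})\in\mathbb Z^{2}$ propagates along the dense orbit $\{\omega_{0}+n\alpha\}$, hence everywhere by continuity and closedness of $\mathbb Z^{2}$, and then uses discreteness plus connectedness to force $\phi_{l}$ constant and derive $l(I-A)^{-1}d\in\mathbb Z^{2}$, contradicting minimality. The paper handles this point more tersely, through the equivalence in Lemma~\ref{claim2.1} and the degree computation~\eqref{e4}; your argument is self-contained and arguably more transparent. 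What the paper's route buys in exchange is the explicit Fourier formula~\eqref{e11} for the solution and the perturbation statement Lemma~\ref{prop4.2}, which is of independent use; but for the specific claim of Proposition~\ref{prop2.1.1}, your derivation is shorter and avoids the unnecessary detour through $C^{2}$.
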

At first, we give a relationship between random periodic points  and  invariant tori.  
Recall the definition of random periodic points as in \cite{HLL}.
For a given system $(\mathbb{T}\times\mathbb{T}^2,\varphi)$ as in \eqref{system0}, the set $\{graph(g_i)| g_i\in L^{\infty}(\mathbb{T},\mathbb{T}^2)\}_{1\leq i\leq n}$ is called a {\bf random periodic orbit} of $(\mathbb{T}\times \mathbb{T}^2,\varphi)$ of period  $n$ if 
 $$ \varphi(graph(g_i))=graph(g_{i+1\mod n}),\forall 1\leq i\leq n.$$
Furthermore, if $g_i$ is a continuous map, then $g_i:\mathbb{T}\to \mathbb{T}^2$ is called a {\bf continuous random periodic point}.

For system $(\mathbb{T}\times \mathbb{T}^2, \varphi)$ as in \eqref{system0}, we call a system $(\mathbb{T}\times \mathbb{T},\varphi_m)$ for some $m\in \mathbb{N}$, which is \begin{equation*}
\begin{aligned}
\varphi_{m}: &\mathbb{T}\times \mathbb{T}^2 \to  \mathbb{T}\times \mathbb{T}^2\\
&(\omega', x)\mapsto (R_{\alpha/m}(\omega'), f_{m\omega'}(x)),
\end{aligned}
\end{equation*}
 the {\bf induced system} of $(\mathbb{T}\times \mathbb{T}^2, \varphi)$. 
\begin{lemma}\label{claim2.1}
For a system $(\mathbb{T}\times \mathbb{T}^2, \varphi)$ as in \eqref{system0}, and $m,n\in \mathbb{N}$, there exists a $\varphi^n$-invariant torus $\mathcal{T}$ of degree $m$, if and only if  the followings hold:
\begin{itemize}
\item[1)] there is a continuous random periodic point $\hat{g}\in C(\mathbb{T},\mathbb{T}^2)$ of period $n$ of the induced system $(\mathbb{T}\times\mathbb{T}^2,\varphi_m)$, 
\item[2)] and $\hat{g}$ is not periodic of period $\frac{1}{N}$ for any $N\in \mathbb{N}\cap [2, +\infty)$ in the case of $m\geq2$.
\end{itemize}
\end{lemma}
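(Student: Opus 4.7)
The plan is to set up the $m$-to-$1$ base covering map $\omega'\mapsto m\omega'\bmod 1$, which intertwines $\varphi_m$ with $\varphi$ and turns $R_{\alpha/m}$ into $R_\alpha$. The candidate correspondence is
\[
\hat g(\omega')=g(m\omega'),\qquad g(\omega)=\hat g\bigl((\omega/m)\bmod 1\bigr),
\]
and continuity together with condition (1) of Definition~\ref{def1} ($g(\omega+km)=g(\omega)$) are equivalent to $\hat g$ being a continuous map on $\mathbb{T}=\mathbb{R}/\mathbb{Z}$. Because $f_\omega$ depends only on $\omega\bmod 1$, substituting $\omega=m\omega'$ gives $f_{\omega+k\alpha}=f_{m\omega'+k\alpha}$ for every $k\ge 0$, so the $n$-fold fiber compositions along $\varphi$ and $\varphi_m$ coincide. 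Condition (2) of Definition~\ref{def1} then reads word-for-word as $f^n_{m\omega'}\hat g(\omega')=\hat g(\omega'+n\alpha/m)$, i.e.\ the $\varphi_m^n$-invariance of $graph(\hat g)$; after setting $g_i:=\pi_{\mathbb{T}^2}\circ\varphi_m^{i-1}(\cdot,\hat g(\cdot))$ for $1\le i\le n$ this exhibits $\hat g$ as the continuous member of a random periodic orbit of period $n$. Thus the existence part of 1) is equivalent to conditions (1)--(2) of Definition~\ref{def1}.

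The main obstacle is matching condition (3) of Definition~\ref{def1} with condition 2) of the lemma. For $0\le i<j\le m-1$, I introduce the closed set
\[
E_{i,j}:=\bigl\{\omega'\in\mathbb{T}:\hat g(\omega'+i/m)=\hat g(\omega'+j/m)\bigr\}.
\]
Since $f$ is insensitive to integer shifts of its argument, $f^n_{m(\omega'+i/m)}=f^n_{m(\omega'+j/m)}=f^n_{m\omega'}$, and random periodicity applied at the two shifted arguments gives
\[
\hat g(\omega'+i/m+n\alpha/m)=f^n_{m\omega'}\hat g(\omega'+i/m)=f^n_{m\omega'}\hat g(\omega'+j/m)=\hat g(\omega'+j/m+n\alpha/m)
\]
whenever $\omega'\in E_{i,j}$. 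Hence $E_{i,j}$ is invariant under the irrational rotation $R_{n\alpha/m}$, and minimality of that rotation forces $E_{i,j}\in\{\emptyset,\mathbb{T}\}$. In the full case $\hat g$ has period $(j-i)/m$, and combining with its built-in period $1$ the closed-subgroup theorem on $\mathbb{R}$ converts this into a period $1/N$ with $N=m/\gcd(j-i,m)\ge 2$.

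Putting the pieces together closes both directions. In the ``if'' direction, condition 2) forbids every $1/N$-period and a fortiori every $(j-i)/m$-period (by the gcd computation above), so each $E_{i,j}$ is empty and the values $\{g(\omega+i)\}_{i=0}^{m-1}$ are pairwise distinct for every $\omega$, yielding condition (3). In the ``only if'' direction, pairwise distinctness forces every $E_{i,j}$ to be empty, which through the same closed-subgroup analysis rules out periods of $\hat g$ of the form $1/N$, $N\ge 2$, giving condition 2). The case $m=1$ is automatic since $\varphi_1=\varphi$, $\hat g=g$, and condition (3) is vacuous. The most delicate step is the $E_{i,j}$-dichotomy, which is the mechanism that promotes a single fiberwise coincidence into a global period of $\hat g$, and where the minimality of the irrational base rotation $R_{n\alpha/m}$ is used decisively.
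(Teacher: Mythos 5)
Your correspondence $\hat g(\omega')=g(m\omega')$ is exactly the one the paper uses, and the verification that conditions (I)--(II) for $\hat g$ under $\varphi_m$ match conditions (1)--(2) of Definition~\ref{def1} for $g$ under $\varphi$ is the same computation the paper performs. Where you go further than the paper is in the treatment of condition (3) versus condition 2): the paper simply asserts ``by (3), $\hat g$ is not periodic of period $1/N$ for any $N\geq 2$'' with no argument, while you introduce the $E_{i,j}$-dichotomy via minimality of $R_{n\alpha/m}$. That dichotomy is correct and is genuinely more content than the paper provides; the observation that $E_{i,j}$ is closed, $R_{n\alpha/m}$-invariant (because $mt\in\mathbb Z$ for $t=(j-i)/m$), and hence either empty or all of $\mathbb T$, is a clean way to make ``distinct somewhere'' into ``distinct everywhere''.

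However, there is a genuine gap in your ``only if'' direction, which is also hidden in the paper's assertion. Your dichotomy establishes: all $E_{i,j}=\emptyset$ $\iff$ $k/m$ is not a period of $\hat g$ for any $1\le k\le m-1$. Writing the (closed) period group of $\hat g$ as $\tfrac{1}{N_0}\mathbb Z$, this condition is equivalent to $\gcd(m,N_0)=1$, not to $N_0=1$. Your step ``every $E_{i,j}$ empty rules out periods of the form $1/N$, $N\ge 2$'' therefore does not follow: if $\hat g$ has minimal period $1/N_0$ with $N_0\ge 2$ and $\gcd(m,N_0)=1$, then every $E_{i,j}$ is still empty (since $m\nmid kN_0$ for $1\le k\le m-1$), yet condition 2) fails. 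The ``closed-subgroup analysis'' you invoke runs in the forward direction only --- from ``$(j-i)/m$ is a period'' to ``$1/N$ is a period with $N=m/\gcd(j-i,m)$'' --- and cannot be reversed without an extra hypothesis relating $N_0$ to $m$. To close this you would need an additional dynamical input (e.g.\ showing that for the systems in question the constraint $f^n_{m\omega'}\hat g(\omega')=f^n_{m\omega'+m/N_0}\hat g(\omega')$ forced by a $1/N_0$-period is incompatible with $\gcd(m,N_0)=1$, $N_0\ge 2$), or the lemma's condition 2) should be weakened to ``no period $1/N$ for $N\mid m$, $N\ge 2$'', which is what (3) actually encodes.
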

\begin{proof}
Assume that there is a $\varphi^n$-invariant torus of degree $m$, that is, there is a continuous map $g:\mathbb{R}\to \mathbb{T}^2$ satisfying (1)-(3) in Definition \ref{def1}. Define a continuous map $\hat{g}: \mathbb{R}\to \mathbb{T}^2$ such that $\hat{g}(\frac{\omega}{m})=g(\omega)$ for any $\omega\in \mathbb{R}$. 
It is straightforward that $\hat{g}$ is a continuous periodic map of period  $1$ since $g\in C(\mathbb{R},\mathbb{T}^2)$ is of period  $m$. 
Moreover, if $m\geq2$, by (3) in Definition \ref{def1}, $\hat{g}$ is not periodic of period $\frac1N$ for any $N\in \mathbb{N}\cap [2, +\infty)$. Due to (2) in Definition \ref{def1}, one has 
\begin{equation}\label{eqn2.2}
g(\omega+n\alpha)=f_{(n-1)\omega}\circ \cdots\circ f_{\omega}(g(\omega)).
\end{equation}
Therefore, we have
\begin{equation*}
\begin{aligned}
\varphi_m^n(\frac{\omega}{m} \mod 1, \hat{g}(\omega))&= (\frac{\omega}{m}+n\frac{\alpha}{m} \mod 1, f_{(n-1)m\frac{\omega}{m}}\circ\cdots\circ f_{m\frac{\omega}{m}}(\hat{g}(\frac{\omega}{m})))\\
&=  (\frac{\omega}{m}+n\frac{\alpha}{m} \mod 1, f_{(n-1){\omega}}\circ\cdots \circ f_{\omega}({g}({\omega})))\\
 &\overset{\eqref{eqn2.2}}{=}(\frac{\omega}{m}+n\frac{\alpha}{m} \mod 1, {g}(\omega+n\omega))\\
 &=(\frac{\omega}{m}+n\frac{\alpha}{m} \mod 1, \hat{g}(\frac{\omega}{m}+n\frac{\alpha}{m})).
\end{aligned}
\end{equation*}
Thus, $\hat{g}:\mathbb{R}\to \mathbb{T}^2$ is a continuous map satisfying 
\begin{itemize}
\item[(I)] $\hat{g}(\omega')=\hat{g}(\omega'+k)$ for all $k\in \mathbb{Z}$;
\item[(II)] $\varphi_{m}^n(\omega' \mod 1, \hat{g}(\omega'))=(\omega'+n\frac{\alpha}{m} \mod 1, \hat{g}(\omega'+n\frac{\alpha}{m}))$;
\item[(III)] $\hat{g}$ is not periodic of period $\frac1N$ for any $N\in \mathbb{N}\cap [2, +\infty)$ in the case of $m\geq2$.
\end{itemize}
Then we have a continuous random periodic point $\hat{g}\in C(\mathbb{T},\mathbb{T}^2)$ of the induced system $(\mathbb{T}\times \mathbb{T}^2,\varphi_m)$, the lift of which is $\hat{g}\in C(\mathbb{R},\mathbb{T}^2)$.

For given $m, n\in \mathbb{N}$,  assume that there is a continuous random periodic point of period $n$ of the induced system $(\mathbb{T}\times\mathbb{T}^2, \varphi_m)$, which is not periodic of period $\frac1N$ for any $N\in \mathbb{N}\cap [2, +\infty)$ in the case of $m\geq 2$.
That is, there is a $\hat{g}\in C(\mathbb{R},\mathbb{T}^2)$ satisfying (I)-(III) as above. 
Let $g(\omega)=\hat{g}\left(\frac{\omega}{m}\right)$ for any $\omega\in \mathbb{R}$, it is straightforward that $g\in C(\mathbb{R}, \mathbb{T}^2)$ satisfies (1)-(3) in Definition \ref{def1}. Then, we get a $\varphi^n$-invariant torus $\mathcal{T}$.
\end{proof}

\subsection{Existence of a ${\varphi}$-invariant torus in the case that $h$ is $C^2$}
In this section, we consider a system $(\mathbb{T}\times\mathbb{T}^2,\tilde{\varphi})$ as follows:
\begin{equation}\label{syst12}
\begin{aligned}
\tilde{\varphi}: &\mathbb{T}\times \mathbb{T}^2 \to  \mathbb{T}\times \mathbb{T}^2\\
&(\omega, x)\mapsto (R_{\alpha}(\omega), Ax+ \tilde{h}(\omega)),
\end{aligned}
\end{equation}
where
$\tilde{h}(\omega)= \begin{pmatrix}
\tilde{h}_1(\omega)\\
\tilde{h}_2(\omega)\end{pmatrix} \in C^2(\mathbb{T}, \mathbb{T}^2)$, the matrix $A\in GL(2, \mathbb{Z})$ is hyperbolic. 
\begin{lemma}\label{prop2}
For a system $( \mathbb{T}\times \mathbb{T}^2, \tilde{\varphi})$ as in \eqref{syst12}, let $m$ be the smallest positive integer such that
\begin{equation}
\label{e3}
m\cdot (I-A)^{-1}\left(\begin{matrix}deg(\tilde{h}_1)\\deg(\tilde{h}_2)\end{matrix} \right)\in\mathbb{Z}^2.
\end{equation} 
Then, there exists a $\tilde{\varphi}$-invariant torus $\mathcal{T}$ of $deg(\mathcal{T})=m$.
\end{lemma}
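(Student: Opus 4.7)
The plan is to construct the invariant torus by explicit Fourier-theoretic solution of a twisted cohomological equation, after peeling off the linear growth dictated by the degree of $\tilde h$.

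First I would lift $\tilde h:\mathbb{T}\to\mathbb{T}^2$ to $\tilde H:\mathbb{R}\to\mathbb{R}^2$ with $\tilde H(\omega+1)=\tilde H(\omega)+\bigl(\begin{smallmatrix}d_1\\ d_2\end{smallmatrix}\bigr)$, where $d_j=deg(\tilde h_j)$, and write $\tilde H(\omega)=\bigl(\begin{smallmatrix}d_1\\ d_2\end{smallmatrix}\bigr)\omega+h^{*}(\omega)$ with $h^{*}\in C^{2}(\mathbb{T},\mathbb{R}^2)$ genuinely $1$-periodic. Set $B:=(I-A)^{-1}\bigl(\begin{smallmatrix}d_1\\ d_2\end{smallmatrix}\bigr)\in\mathbb{R}^2$, noting that $I-A$ is invertible because $A$ is hyperbolic. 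By the definition of $m$, $mB\in\mathbb{Z}^2$ and $jB\notin\mathbb{Z}^2$ for $1\le j\le m-1$. I would then look for the invariant torus in the ansatz form
\[
g(\omega)\;:=\;B\omega+F(\omega)\pmod{\mathbb{Z}^2},
\]
where $F:\mathbb{R}\to\mathbb{R}^2$ is continuous and $1$-periodic. A direct computation, using $(A-I)B=-\bigl(\begin{smallmatrix}d_1\\ d_2\end{smallmatrix}\bigr)$, reduces the invariance equation $Ag(\omega)+\tilde h(\omega)=g(\omega+\alpha)$ in $\mathbb{T}^2$ to the exact $\mathbb{R}^2$-valued cohomological identity
\begin{equation*}
F(\omega+\alpha)-AF(\omega)\;=\;h^{*}(\omega)-B\alpha.
\end{equation*}

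Next I would solve this equation by Fourier expansion in $\omega$. Writing $F(\omega)=\sum_{k\in\mathbb{Z}}\widehat F(k)e^{2\pi ik\omega}$ and $h^{*}(\omega)-B\alpha=\sum_{k\in\mathbb{Z}}\widehat R(k)e^{2\pi ik\omega}$, the equation becomes, mode by mode,
\[
\bigl(e^{2\pi ik\alpha}I-A\bigr)\widehat F(k)=\widehat R(k).
\]
The essential analytic point, which replaces any Diophantine small-divisor analysis, is that because $A\in GL(2,\mathbb{Z})$ is hyperbolic, its eigenvalues $\lambda_1,\lambda_2$ satisfy $|\lambda_j|\neq 1$, whereas $|e^{2\pi ik\alpha}|=1$; hence $\det\bigl(e^{2\pi ik\alpha}I-A\bigr)=\prod_j(e^{2\pi ik\alpha}-\lambda_j)$ is bounded below by a positive constant independent of $k$, and $\|(e^{2\pi ik\alpha}I-A)^{-1}\|$ is uniformly bounded in $k$. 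Since $\tilde h\in C^{2}$ gives $|\widehat R(k)|=O(1/k^2)$, the coefficients $\widehat F(k)$ are absolutely summable, so $F$ is a well-defined continuous $1$-periodic function.

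Finally I would verify the three defining properties of a $\tilde\varphi$-invariant torus of degree $m$ for the $g$ so constructed. Periodicity $g(\omega+km)\equiv g(\omega)\pmod{\mathbb{Z}^2}$ is immediate from the $1$-periodicity of $F$ and from $mB\in\mathbb{Z}^2$; the invariance relation $\tilde\varphi(\omega\bmod 1,g(\omega))=(\omega+\alpha\bmod 1,g(\omega+\alpha))$ holds by construction (with $n=1$); and $\{g(\omega),\ldots,g(\omega+m-1)\}$ are pairwise distinct because $g(\omega+j)-g(\omega)\equiv jB\pmod{\mathbb{Z}^2}$ and, by the minimality of $m$, $jB\notin\mathbb{Z}^2$ for $1\le j\le m-1$. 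I expect the main (though modest) obstacle to be the bookkeeping of the lift to $\mathbb{R}^2$, i.e., correctly separating the linear-growth part $B\omega$ from the periodic remainder so that the cohomological equation becomes an honest equation in $\mathbb{R}^2$ rather than only modulo $\mathbb{Z}^2$; once this is done, the hyperbolicity of $A$ makes the Fourier step essentially free.
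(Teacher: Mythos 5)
Your proof is correct and uses essentially the same Fourier--cohomological approach as the paper: peel off the linear growth $B\omega$ dictated by the degree equation, reduce to a twisted cohomological equation for the $1$-periodic remainder $F$, and solve it mode by mode, with hyperbolicity of $A$ giving a uniform bound on the resolvents $(e^{2\pi i k\alpha}I-A)^{-1}$ in place of any Diophantine condition. The only differences are cosmetic --- you work directly on the original circle with a $1$-periodic $F$ instead of passing to the induced system $\tilde\varphi_m$ and then observing that the Fourier support lies in $m\mathbb{Z}$, and you verify the $C^2\Rightarrow$ absolute-summability step explicitly rather than citing Lemma~7.3 of \cite{HLL} as the paper's Claim~\ref{claim1} does.
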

\begin{proof}[Proof of Lemma \ref{prop2}]
To find an invariant torus, we need to find $m, n\in \mathbb{N}$ and a periodic continuous map $g: \mathbb{R}\to \mathbb{T}^2$ satisfying (1)-(3) in Definition \ref{def1}. By Lemma \ref{claim2.1}, to get an invariant torus of $(\mathbb{T}\times \mathbb{T}^2,\tilde{\varphi})$, it is equivalent to find $m,n\in \mathbb{N}$ and a continuous mapping $\tilde{g}:\mathbb{R}\to \mathbb{T}^2$ satisfying 
\begin{itemize}
\item[(I)] $\tilde{g}(\omega')=\tilde{g}(\omega'+k)$ for all $k\in \mathbb{Z}$;
\item[(II)] $\tilde{\varphi}_{m}^n(\omega' \mod 1, \tilde{g}(\omega'))=(\omega'+n\frac{\alpha}{m} \mod 1, \tilde{g}(\omega'+n\frac{\alpha}{m}))$;
\item[(III)] $\tilde{g}$ is not periodic of period $\frac1N$ for any $N\in\mathbb{N}\cap [2, +\infty)$ in the case of $m\geq2$.
\end{itemize}
Additionally, due to  (II)  above, we have
\begin{equation}\label{eqn21}
\tilde{g}(\omega' + n\frac{\alpha}{m})= A^n \tilde{g}(\omega')+ A^{n-1}\tilde{h}(m\omega' \mod 1)+\cdots + \tilde{h}(m\omega'+(n-1)\alpha \mod 1).
\end{equation}

\medskip

{Firstly, we show when \eqref{e3} holds, we may have solutions.}
By treating $\tilde{g}=\begin{pmatrix} \tilde{g}_1\\ \tilde{g}_2\end{pmatrix}$ as a continuous map from $\mathbb{T}$ to $\mathbb{T}^2$ and comparing the degrees of both sides of  \eqref{eqn21}, we have
\begin{eqnarray*}
\begin{pmatrix} deg(\tilde{g}_1)\\ deg(\tilde{g}_2)\end{pmatrix}=A^n\begin{pmatrix} deg(\tilde{g}_1)\\ deg(\tilde{g}_2)\end{pmatrix}+ mA^{n-1}\begin{pmatrix} deg(\tilde{h}_1)\\ deg(\tilde{h}_2)\end{pmatrix} +  mA^{n-2}\begin{pmatrix} deg(\tilde{h}_1)\\ deg(\tilde{h}_2)\end{pmatrix}  +\cdots+ m \begin{pmatrix} deg(\tilde{h}_1)\\ deg(\tilde{h}_2)\end{pmatrix}.
\end{eqnarray*}
Due to the hyperbolicity of $A\in GL(2, \mathbb{Z})$, we have $\det(I-A)\neq 0$.  Therefore
\begin{equation}\label{e4}  \begin{pmatrix} deg(\tilde{g}_1)\\ deg(\tilde{g}_2)\end{pmatrix}=m \cdot (I-A)^{-1}\begin{pmatrix} deg(\tilde{h}_1)\\ deg(\tilde{h}_2)\end{pmatrix},\end{equation}
as $(I-A^n)=(I-A)(I+A+\cdots+ A^{n-1})$. Thus, for given system $(\mathbb{T}\times \mathbb{T}^2,\tilde{\varphi})$, we can only have desired $\tilde{g}\in C(\mathbb{R},\mathbb{T}^2)$ when \eqref{e3} is satisfied.

\medskip

{Secondly, we show the existence of  a continuous random fixed point of  the induced system $(\mathbb{T}\times \mathbb{T}^2, \tilde{\varphi}_{m})$.}
Let $m$ be the smallest positive integer satisfying \eqref{e3}, we get $deg(\tilde{g}_1), deg(\tilde{g}_2)\in \mathbb{Z}$ by \eqref{e4}. Let
\begin{equation}\label{e5}
\begin{cases}
\tilde{g}_1(\omega)=deg(\tilde{g}_1)\omega+\tilde{r}_1(\omega), &\text{ with $deg(\tilde{r}_1)=0$};\\
\tilde{g}_2(\omega)=deg(\tilde{g}_2)\omega+\tilde{r}_2(\omega), &\text{ with $deg(\tilde{r}_2)=0$};\\
\tilde{h}_1(\omega)=deg(\tilde{h}_1)\omega+s_1(\omega), &\text{ with $deg(s_1)=0$};\\
\tilde{h}_2(\omega)=deg(\tilde{h}_2)\omega+s_2(\omega), &\text{ with $deg(s_2)=0$}.
\end{cases}
\end{equation}
According to \eqref{eqn21} for the case that $n=1$, we have 
\begin{equation*}
\begin{aligned}
&\begin{pmatrix} deg(\tilde{g}_1)\omega+deg(\tilde{g}_1)\frac{\alpha}{m}+\tilde{r}_1(\omega+\frac{\alpha}{m})\\ deg(\tilde{g}_2)\omega+deg(\tilde{g}_2)\frac{\alpha}{m}+\tilde{r}_2(\omega+\frac{\alpha}{m})\end{pmatrix}\\
&=A\begin{pmatrix} deg(\tilde{g}_1)\omega+\tilde{r}_1(\omega)\\ deg(\tilde{g}_2)\omega+\tilde{r}_2(\omega)\end{pmatrix}+\begin{pmatrix}deg(\tilde{h}_1)m\omega+s_1(m\omega)\\ deg(\tilde{h}_2)m\omega+s_2(m\omega)\end{pmatrix}\\
&\overset{\eqref{e4}}{=}A\begin{pmatrix} \tilde{r}_1(\omega)\\ \tilde{r}_2(\omega)\end{pmatrix}+(A+I-A)\begin{pmatrix} deg(\tilde{g}_1)\omega\\ deg(\tilde{g}_2)\omega\end{pmatrix}+\begin{pmatrix} s_1(m\omega)\\ s_2(m\omega)\end{pmatrix}.
\end{aligned}
\end{equation*}
Thus, one has
\begin{equation}\label{e6}
\begin{aligned}
\begin{pmatrix}\tilde{r}_1(\omega+\frac{\alpha}{m})\\ \tilde{r}_2(\omega+\frac{\alpha}{m})\end{pmatrix}&=A\begin{pmatrix} \tilde{r}_1(\omega)\\ \tilde{r}_2(\omega)\end{pmatrix}+\begin{pmatrix} s_1(m\omega)-deg(\tilde{g}_1)\frac{\alpha}{m}\\ s_2(m\omega)-deg(\tilde{g}_2)\frac{\alpha}{m}\end{pmatrix}.
\end{aligned}
\end{equation}
Assume the Fourier series of $\tilde{r}_1, \tilde{r}_2$, $s_1$ and $s_2$ as follows
\begin{equation}\label{e7}
\begin{cases}
\tilde{r}_1(\omega)=\sum_{k\in \mathbb{Z}}a_k^1e^{2\pi i k\omega}\ \text{a.e.,}&\text{ with $a_k^1=\overline{a_{-k}^1}$};\\
\tilde{r}_2(\omega)=\sum_{k\in \mathbb{Z}}a_k^2e^{2\pi i k\omega}\ \text{a.e.,}&\text{ with $a_k^2=\overline{a_{-k}^2}$};\\
s_1(\omega)=\sum_{k\in \mathbb{Z}}b_k^1e^{2\pi i k\omega}\ \text{a.e.,}&\text{ with $b_k^1=\overline{b_{-k}^1}$};\\
s_2(\omega)=\sum_{k\in \mathbb{Z}}b_k^2e^{2\pi i k\omega}\ \text{a.e.,}&\text{ with $b_k^2=\overline{b_{-k}^2}$}.
\end{cases}
\end{equation}
According to  \eqref{e6} and the comparison of the coeffiecients of $e^{2\pi i k\omega}$ for $k\in \mathbb{Z}$, we have the following three cases. When $k=0$, one has
\begin{equation}\label{e8}
\begin{pmatrix}a_0^1\\ a_0^2\end{pmatrix}=A\begin{pmatrix} a_0^1\\ a_0^2 \end{pmatrix}+\begin{pmatrix} b_0^1-deg(\tilde{g}_1)\frac{\alpha}{m}\\ b_0^2-deg(\tilde{g}_2)\frac{\alpha}{m}\end{pmatrix}.
\end{equation}
When $k\not= 0$ and $k/m\in \mathbb{Z}$, one has
\begin{equation*}\label{e9}
e^{\frac{2\pi ik\alpha}{m}}\begin{pmatrix}a_k^1\\ a_k^2\end{pmatrix}=A\begin{pmatrix} a_k^1\\ a_k^2 \end{pmatrix}+\begin{pmatrix} b_{{k}/m}^1\\ b_{{k}/m}^2\end{pmatrix}.
\end{equation*}
Due to the hyperbolicity of $A\in GL(2, \mathbb{Z})$, one has $$ \det(e^{2\pi i k\alpha/m}I- A) \not= 0,$$
and then $e^{2\pi i k\alpha/m}I- A$ is invertible.
Therefore, we have
\begin{equation}\label{e9}
\begin{pmatrix}a_k^1\\ a_k^2\end{pmatrix}=(e^{\frac{2\pi ik\alpha}{m}}I-A)^{-1}\begin{pmatrix} b_{{k}/m}^1\\ b_{{k}/m}^2\end{pmatrix}.
\end{equation}
When $k\not= 0$ and $k/m\notin \mathbb{Z}$, by comparing the coefficients, one has
\begin{equation}\label{e10}
\begin{pmatrix}a_k^1\\ a_k^2\end{pmatrix}=\begin{pmatrix} 0\\ 0 \end{pmatrix}.
\end{equation}
Thus, according to \eqref{e8}, \eqref{e9}, and \eqref{e10}, one has
\begin{equation}\label{e11}
\begin{pmatrix}a_k^1\\ a_k^2\end{pmatrix}=
\begin{cases}
(I-A)^{-1}\begin{pmatrix} b_0^1-deg(\tilde{g}_1)\frac{\alpha}{m}\\ b_0^2-deg(\tilde{g}_2)\frac{\alpha}{m}\end{pmatrix}, &\text{when $k=0$;}\\
(e^{2\pi i k\alpha/m}I- A)^{-1}\begin{pmatrix} b_{{k}/m}^1\\ b_{{k}/m}^2\end{pmatrix}, &\text{when $k/m\in \mathbb{Z}\setminus\{0\}$;}\\
\begin{pmatrix} 0\\ 0 \end{pmatrix}, &\text{otherwise.}
\end{cases}
\end{equation}

In the next, we claim the following.
\begin{claim}\label{claim1}
For given $h_1, h_2\in C^2(\mathbb{T},\mathbb{T})$, $\tilde{r}_{1}, \tilde{r}_2$ are continuous periodic functions from $\mathbb{R}$ to $\mathbb{R}$ of period  1. 
\end{claim}
The proof of Claim \ref{claim1} can be found in the proof of Lemma 7.3 of \cite{HLL}.
By Claim \ref{claim1}, $\tilde{r}_1, \tilde{r}_2$ can be viewed as continuous functions from $\mathbb{T}$ to $\mathbb{T}$. Then for the smallest positive integer $m$ satisfying \eqref{e4}, we have a desired  $\tilde{g}\in C(\mathbb{R},\mathbb{T}^2)$. Moreover, in the case that $m\geq2$, for any $m'\in \mathbb{N}\cap (0,m)$, there is no continuous random periodic point of the induced system $(\mathbb{T}\times \mathbb{T}^2, \tilde{\varphi}_{m'})$. 
By Lemma \ref{claim1}, we have shown the existence of a $\tilde{\varphi}$-invariant torus $\mathcal{T}$ of degree $m$.
\end{proof}

\subsection{Proof of Proposition \ref{prop2.1.1}}\label{sect4.3}
To get Proposition \ref{prop2.1.1}, we consider the perturbed system $(\mathbb{T}\times \mathbb{T}^2, \tilde{\varphi}_r)$:
\begin{equation}\label{syst2}
\begin{aligned}
\tilde{\varphi}_{r}: &\mathbb{T}\times \mathbb{T}^2 \to  \mathbb{T}\times \mathbb{T}^2\\
&(\omega, x)\mapsto (R_{\alpha}(\omega), Ax+ \tilde{h}(\omega)+r(\omega)),
\end{aligned}
\end{equation}
where $\alpha\in \mathbb{R}\setminus \mathbb{Q}$, $A\in GL_{2}(\mathbb{Z})$ is hyperbolic, $\tilde{h}=\begin{pmatrix} \tilde{h}_1 \\ \tilde{h}_2\end{pmatrix}\in C^2(\mathbb{T},\mathbb{T}^2)$, and $r\in C(\mathbb{T} ,\mathbb{T}^2)$.

\begin{lemma}\label{prop4.2}
Let  $(\mathbb{T}\times \mathbb{T}^2,\tilde{\varphi}_r)$ be given as in \eqref{syst2},  if $\|r\|_{C^0}<1$,  there is an $\eta\in C(\mathbb{T},\mathbb{T}^2)$  with $deg(\eta)=\begin{pmatrix}0\\ 0\end{pmatrix}$ such that
 \begin{equation}
\label{eqn421}
\eta\circ R_{\alpha}(\omega)=A\eta(\omega)+r(\omega), \forall \omega\in \mathbb{T}.
\end{equation}
\end{lemma}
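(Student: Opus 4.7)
\emph{Proof plan.} The equation in question is the linear cohomological equation $\eta\circ R_\alpha - A\eta = r$ over the irrational rotation. The plan is to write down an explicit solution by decomposing the fibre $\mathbb{R}^2$ into the stable and unstable eigenspaces of the hyperbolic matrix $A$, inverting on each summand by a one-sided telescoping series, and projecting the resulting $\mathbb{R}^2$-valued function to $\mathbb{T}^2$. The hypothesis $\|r\|_{C^0}<1$ is used to identify $r$ with a well-defined $\mathbb{R}^2$-valued continuous representative $\tilde r$ on $\mathbb{T}$, which in turn forces $deg(\eta)=\left(\begin{smallmatrix}0\\0\end{smallmatrix}\right)$ automatically.

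Since $A\in GL(2,\mathbb{Z})$ is hyperbolic, split $\mathbb{R}^2 = E^s\oplus E^u$ with spectral projectors $P_s,P_u$; set $A_s := A|_{E^s}$ and $A_u := A|_{E^u}$, and fix $\mu\in(0,1)$ and $C\ge 1$ with $\|A_s^k\|,\,\|A_u^{-k}\|\le C\mu^k$ for $k\ge 0$. Then I define
\[
\tilde\eta(\omega)\;:=\;\sum_{k=1}^{\infty}A_s^{\,k-1}P_s\,\tilde r(\omega-k\alpha)\;-\;\sum_{k=0}^{\infty}A_u^{-(k+1)}P_u\,\tilde r(\omega+k\alpha).
\]
Both series converge absolutely and uniformly by the geometric decay of the operator norms and the boundedness of $\tilde r$, so $\tilde\eta\in C(\mathbb{T},\mathbb{R}^2)$. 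The cohomological identity is then verified by routine reindexing: substituting $\omega+\alpha$ and applying $j=k-1$ (resp.\ $j=k+1$) in the stable (resp.\ unstable) sum extracts the boundary contribution $P_s\tilde r(\omega)$ and deletes the $k=0$ term respectively; applying $A$ to $\tilde\eta(\omega)$ uses $AP_s=A_sP_s$ and $AP_u=A_uP_u$ to advance each exponent by one; and $P_s\tilde r+P_u\tilde r=\tilde r$ closes the identity $\tilde\eta(\omega+\alpha) = A\tilde\eta(\omega)+\tilde r(\omega)$ in $\mathbb{R}^2$. Setting $\eta := \tilde\eta \bmod \mathbb{Z}^2$ then yields an $\eta\in C(\mathbb{T},\mathbb{T}^2)$ of degree $(0,0)$ satisfying \eqref{eqn421}.

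Hyperbolicity of $A$ is genuinely used only in the uniform convergence of the two one-sided sums: the spectral gap $|\lambda_s|<1<|\lambda_u|$ removes any small-denominator obstruction and makes the construction unconditional in $\alpha$, sidestepping the Diophantine issues that would appear if $A$ admitted a neutral direction. The main anticipated difficulty is therefore not analytic but bookkeeping: namely, justifying that the hypothesis $\|r\|_{C^0}<1$ indeed pins down a continuous $\mathbb{R}^2$-valued lift $\tilde r$ of $r$ on $\mathbb{T}$, so that the torus-valued equation \eqref{eqn421} is genuinely equivalent to the additive one solved above on the universal cover, and conversely that reducing $\tilde\eta$ modulo $\mathbb{Z}^2$ produces a well-defined degree-zero $\eta$ rather than only a solution up to an integer cocycle.
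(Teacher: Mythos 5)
Your proposal is correct and follows essentially the same route as the paper: lift to an $\mathbb{R}^2$-valued $\tilde r$ (using $\|r\|_{C^0}<1$ to get $deg(r)=(0,0)$ so the lift is period-$1$), split $\mathbb{R}^2=E^s\oplus E^u$, define $\tilde\eta$ by the identical pair of one-sided geometric series, and reduce mod $\mathbb{Z}^2$. The only presentational difference is that you work directly on $\mathbb{T}$, making periodicity of $\tilde\eta$ automatic, whereas the paper works on $\mathbb{R}$ and verifies periodicity of $\tilde\eta$ via a separate boundedness/contradiction argument — a step your formulation renders unnecessary.
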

\begin{proof}
By lifting to the universal covering, finding the solution $\eta$ of \eqref{eqn421} is equivalent to solve
\begin{equation}
\label{eqn424}
\tilde{r}(\omega)=\tilde{\eta}(\omega+\alpha)-A\tilde{\eta}(\omega),
\end{equation}
where $\tilde{r}:\mathbb{R}\to \mathbb{R}^2$ is the lift of $r$, and $\tilde{\eta}:\mathbb{R}\to \mathbb{R}^2$ is the lift of $\eta: \mathbb{T}\to \mathbb{T}^2$.

Since $A$ is hyperbolic, we have a splitting $\mathbb{R}^2=E^{u}\oplus E^s$, where $E^u$/$E^{s}$ is the unstable/stable subspace, and denote by $\pi^{u}$ and $\pi^s$ the corresponding projections. For $\tau=u,s$, we have
\begin{equation*}\begin{aligned} \pi^{\tau}\tilde{r}(\omega)&=\pi^{\tau}\tilde{\eta}(\omega+\alpha)-\pi^\tau A\tilde{\eta}(\omega)=\pi^{\tau}\tilde{\eta}(\omega+\alpha)-A|_{E^\tau}\pi^\tau\tilde{\eta}(\omega),
\end{aligned}
\end{equation*}
where $A|_{E^{\tau}}$, $\tau=u,s$ represents the operator restricted on the corresponding subspace.
For any $k\in \mathbb{N}$, we have 
\begin{equation*}
(A|_{E^s})^{k-1}\pi^s\tilde{r}(\omega-k\alpha)=(A|_{E^s})^{k-1}\pi^s\tilde{\eta}(\omega-(k-1)\alpha)-(A|_{E^s})^k\pi^s\tilde{\eta}(\omega-k\alpha),
\end{equation*}
and 
\begin{equation*}
-(A|_{E^u})^{-(k+1)}\pi^u\tilde{r}(\omega+k\alpha)=(A|_{E^u})^{-k}\pi^u\tilde{\eta}(\omega+k\alpha)-(A|_{E^u})^{-k-1}\pi^u\tilde{\eta}(\omega+(k+1)\alpha).
\end{equation*}
Then, we have 
\begin{equation}\label{eqn425}
\begin{aligned}
\pi^{s}\tilde{\eta}(\omega)&=\sum_{k=1}^{+\infty}(A|_{E^s})^{k-1}\pi^s\tilde{r}(\omega-k\alpha),\\
\pi^u\tilde{\eta}(\omega)&=-\sum_{k=0}^{+\infty}(A|_{E^u})^{-k-1}\pi^u\tilde{r}(\omega+k\alpha).
\end{aligned}\end{equation}

Let $\|r\|_{C^0}<1$, one has  $deg(r)=\begin{pmatrix} deg(r_1)\\ deg(r_2)\end{pmatrix}=\begin{pmatrix}0\\0\end{pmatrix}$, where $r=\begin{pmatrix}r_1\\r_2\end{pmatrix}\in C(\mathbb{T},\mathbb{T}^2)$. 
Hence $\tilde{r}:\mathbb{R}\to \mathbb{R}^2$ is a periodic continuous mapping of period 1, 
and thus bounded. Then, we have the convergence of $\pi^u\tilde{\eta}$ and $\pi^s\tilde{\eta}$ in \eqref{eqn425}, respectively. 

According to \eqref{eqn421} and $deg(r)=\begin{pmatrix}0\\ 0\end{pmatrix}$, we have 
\begin{equation*}
\begin{pmatrix}0\\0\end{pmatrix}=deg(r)=(I-A)deg(\eta).
\end{equation*}
 Since $A\in GL(2, \mathbb{Z})$ is hyperbolic, one has $\det(I-A)\not=0$. Then, the solution $\eta\in C(\mathbb{T},\mathbb{T}^2)$  to \eqref{eqn421} should be of degree $deg(\eta)=\begin{pmatrix}0\\ 0\end{pmatrix}$.
Therefore, we need to prove that  $\tilde{\eta}:=\pi^u\tilde{\eta}+\pi^s\tilde{\eta}: \mathbb{R}\to \mathbb{R}^2$ is a periodic continuous mapping  of period $1$, which will be carried out by contradiction. 

Since $\tilde{r}$ is periodic of period  $1$, for any $i\in \mathbb{Z}, \omega\in \mathbb{R}$, one has
$$
\tilde{\eta}(\omega+\alpha)-A\tilde{\eta}(\omega)=\tilde{r}(\omega)=\tilde{r}(\omega+i)=\tilde{\eta}(\omega+i+\alpha)-A\tilde{\eta}(\omega+i).
$$
Then, for any $i\in \mathbb{Z}$, one has
\begin{equation*}
\tilde{\eta}(\omega+i+\alpha)-\tilde{\eta}(\omega+\alpha)=A\left(\tilde{\eta}(\omega+i)-\tilde{\eta}(\omega)\right),
\end{equation*}
and thus for any $i, n\in \mathbb{N}$, one has 
\begin{equation}\label{eqn426}
\begin{aligned}
\tilde{\eta}(\omega+i+n\alpha)-\tilde{\eta}(\omega+n\alpha)&=A^n(\tilde{\eta}(\omega+i)-\tilde{\eta}(\omega)),\\
\tilde{\eta}(\omega+i-n\alpha)-\tilde{\eta}(\omega-n\alpha)&=A^{-n}(\tilde{\eta}(\omega+i)-\tilde{\eta}(\omega)).
\end{aligned}
\end{equation}
Assume there is an $\omega_0\in \mathbb{R}$ such that 
$$ \tilde{\eta}(\omega_0+1)-\tilde{\eta}(\omega_0)=\begin{pmatrix}x_0\\y_0\end{pmatrix}\not=\begin{pmatrix} 0\\ 0\end{pmatrix}.$$
According to \eqref{eqn426}, one has
\begin{equation*}
\begin{aligned}
\tilde{\eta}(\omega+1+n\alpha)-\tilde{\eta}(\omega+n\alpha)&=A^n\begin{pmatrix}x_0\\ y_0\end{pmatrix},\\
\tilde{\eta}(\omega+1-n\alpha)-\tilde{\eta}(\omega-n\alpha)&=A^{-n}\begin{pmatrix} x_0\\ y_0\end{pmatrix}.
\end{aligned}
\end{equation*}
Since $A: \mathbb{R}^2\to \mathbb{R}^2$ is a given hyperbolic operator and $\tilde{\eta}$ is bounded due to \eqref{eqn425}, the equations stated above cannot hold simultaneously, which is a contradiction.

In summary, we have shown that $\tilde{\eta}\in C(\mathbb{R},\mathbb{R}^2)$ is a bounded periodic mapping of period  1. Then we get an $\eta: \mathbb{T}\to \mathbb{T}^2$, the lift of which  is $\tilde{\eta}$. Thus, we get the desired $\eta\in C( \mathbb{T}, \mathbb{T}^2)$ satisfying 
$$\eta\circ R_{\alpha}(\omega)=A\eta(\omega)+r(\omega)\text{ and }deg(\eta)=\begin{pmatrix} 0\\ 0\end{pmatrix},$$
which completes the proof of Lemma \ref{prop4.2}.
\end{proof}
Now, we show Proposition \ref{prop2.1.1} by combining Lemma \ref{prop2} and Lemma \ref{prop4.2}.
\begin{proof}[Proof of Proposition \ref{prop2.1.1}]
As $C^2(\mathbb{T},\mathbb{T}^2)$ is $C^{0}$-dense in $C(\mathbb{T},\mathbb{T}^2)$,  for any $h\in C(\mathbb{T},\mathbb{T}^2)$, there is an $\tilde{h}=\begin{pmatrix} \tilde{h}_1\\ \tilde{h}_2\end{pmatrix}\in C^2(\mathbb{T},\mathbb{T}^2)$ such that 
$\|h-\tilde{h}\|_{C^0}<1$.  Let $r=h-\tilde{h}\in C(\mathbb{T},\mathbb{T}^2)$, the system $(\mathbb{T}\times \mathbb{T}^2,\varphi)$ in \eqref{syst1} is same as the system $(\mathbb{T}\times \mathbb{T}^2,\tilde{\varphi}_r)$ in \eqref{syst2}, and $deg(r)=\begin{pmatrix} deg(h_1-\tilde{h}_1)\\ deg(h_2-\tilde{h}_2)\end{pmatrix}=\begin{pmatrix}0\\ 0\end{pmatrix}$.

According to Lemma \ref{prop2}, there is an $m\in \mathbb{N}$, $n=1$ and a continuous mapping $\hat{g}: \mathbb{R}\to \mathbb{T}^2$ satisfying (1)-(3) of Definition \ref{def1} for the system $(\mathbb{T}\times\mathbb{T}^2,\tilde{\varphi})$, where $m$ is the smallest positive integer satisfying 
$$m\cdot (I-A)^{-1}\begin{pmatrix}deg(\tilde{h}_1)\\ deg(\tilde{h}_2) \end{pmatrix}=m\cdot (I-A)^{-1}\begin{pmatrix}deg({h}_1)\\ deg({h}_2) \end{pmatrix}\in\mathbb{Z}^2.$$ 
By Lemma \ref{prop4.2}, there is an $\eta\in C(\mathbb{T},\mathbb{T}^2)$ with $deg(\eta)=\begin{pmatrix}0\\0\end{pmatrix}$. 
Then, the lifts of $r,\eta\in C(\mathbb{T},\mathbb{T}^2)$ can be seen as continuous maps from $\mathbb{R}$ to $\mathbb{T}^2$. 
That is to say, there are $\tilde{r}, \tilde{\eta}\in C(\mathbb{R},\mathbb{T}^2)$ of period $1$ satisfying
$$\tilde{\eta}({\omega}+\alpha)=A\tilde{\eta}(\omega)+\tilde{r}(\omega),\ \forall \omega\in \mathbb{R}.$$

Consider the system $(\mathbb{T}\times\mathbb{T}^2,\varphi)$ as in \eqref{syst1}, Condition (1) and (3) of Definition \ref{def1} hold for the map $\hat{g}+\tilde{\eta}\in C(\mathbb{R},\mathbb{T}^2)$. Now we prove Condition (2) of Definition \ref{def1} still holds for $\hat{g}+\tilde{\eta}$.
For any $\omega\in \mathbb{R}$, one has $$ \hat{g}(\omega+\alpha)=A\hat{g}(\omega)+\tilde{h}(\omega \mod 1).$$
Then for any $\omega\in \mathbb{R}$, one has
\begin{equation*}
\begin{aligned}
&\varphi(\omega \mod 1, \hat{g}(\omega)+\tilde{\eta}(\omega))=\tilde{\varphi}^n_r(\omega \mod 1, \hat{g}(\omega)+\tilde{\eta}(\omega))\\
&= (\omega+\alpha \mod 1, A\hat{g}(\omega)+A\tilde{\eta}(\omega)+ \tilde{h}(\omega \mod 1)+\tilde{r}(\omega \mod 1)\\
&=(\omega+\alpha \mod 1, \hat{g}(\omega+\alpha)+ \tilde{\eta}(\omega+\alpha)).
\end{aligned}
\end{equation*}
Therefore, we have shown the existence of a $\varphi$-invariant torus of degree $m$.
\end{proof}
\section{Homogeneous systems}\label{sect3}
In this section, we consider the following type of systems.
We call the system  
\begin{equation}\label{systemh}
\begin{aligned}
\varphi_0: \mathbb{T}\times \mathbb{T}^2 &\to \mathbb{T}\times \mathbb{T}^2, \\
	(\omega,x )&\mapsto (R_{\alpha}(\omega), Ax)
\end{aligned}
\end{equation}
the  {\bf homogeneous system} of  \eqref{syst1}.

Firstly, we show that if $A\in GL(2, \mathbb{Z})$ is hyperbolic, then the set of periodic points of $A: \mathbb{T}^2\to \mathbb{T}^2$ is one-to-one corresponding to the set of  invariant tori of the homogeneous system $(\mathbb{T}\times \mathbb{T}^2, \varphi_0)$.

\begin{lemma}\label{lem331}
Let $\mathcal{G}(\varphi_0)$ be the collection of all invariant tori of the homogeneous system $(\mathbb{T}\times\mathbb{T}^2, \varphi_0)$ as in Definition \ref{def1}, and $P(A):=\{x\in \mathbb{T}^2: A^nx=x, \text{ for some }n\in \mathbb{N}\}$ be the set of all periodic points of $A$. If $A\in GL(2, \mathbb{Z})$ is hyperbolic, then 
$$ \mathcal{G}(\varphi_0)= \{graph(g): g(\omega)\equiv x, x\in P(A)\}.$$
\end{lemma}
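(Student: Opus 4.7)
\medskip

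\noindent\textbf{Proof proposal for Lemma \ref{lem331}.}
The inclusion $\supseteq$ is immediate: if $x\in P(A)$ with $A^n x=x$, then the constant map $g(\omega)\equiv x$ satisfies (1) of Definition \ref{def1} with $m=1$, (2) with this $n$, and (3) trivially, so $\{(\omega,x):\omega\in\mathbb{T}\}\in\mathcal{G}(\varphi_0)$. The real content is the reverse inclusion, where given any $\mathcal{T}\in\mathcal{G}(\varphi_0)$ with associated data $(m,n,g)$ we must show $g$ is constant and that the constant lies in $P(A)$; condition (3) will then force $m=1$.

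The plan is to pass to the universal cover. Write $\tilde g:\mathbb{R}\to\mathbb{R}^2$ for a lift of $g$, and note that the lifted version of (2) reads
\[
\tilde g(\omega+n\alpha)=A^n\tilde g(\omega)+v\qquad\text{for all }\omega\in\mathbb{R},
\]
where the correction $v\in\mathbb{Z}^2$ is constant by continuity. Because $g$ is $m$-periodic on $\mathbb{R}$ there is $c\in\mathbb{Z}^2$ with $\tilde g(\omega+m)=\tilde g(\omega)+c$. Substituting $\omega\mapsto\omega+m$ in the displayed relation and using $\det(I-A^n)\neq 0$ (a consequence of hyperbolicity of $A\in GL(2,\mathbb{Z})$), I obtain $(I-A^n)c=0$, hence $c=0$. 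Thus $\tilde g$ is genuinely $m$-periodic, and in particular bounded.

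Next I remove the constant $v$: taking $y:=-(I-A^n)^{-1}v\in\mathbb{R}^2$ and setting $\tilde h(\omega):=\tilde g(\omega)-y$ yields the clean functional equation
\[
\tilde h(\omega+n\alpha)=A^n\tilde h(\omega),
\]
with $\tilde h$ continuous and bounded. Now I use the hyperbolic splitting $\mathbb{R}^2=E^u\oplus E^s$ of $A$, writing $\pi^u,\pi^s$ for the associated projections. Iterating the equation forward,
\[
\pi^s\tilde h(\omega+kn\alpha)=(A|_{E^s})^{kn}\pi^s\tilde h(\omega)\xrightarrow{k\to+\infty}0,
\]
while iterating backward,
\[
\pi^u\tilde h(\omega+kn\alpha)=(A|_{E^u})^{kn}\pi^u\tilde h(\omega)\xrightarrow{k\to-\infty}0.
\]
Since $n\alpha\notin\mathbb{Q}$, the orbit $\{kn\alpha\bmod m:k\in\mathbb{Z}\}$ is dense in $\mathbb{R}/m\mathbb{Z}$, so continuity of $\pi^s\tilde h$ and $\pi^u\tilde h$ (both of which factor through $\mathbb{R}/m\mathbb{Z}$) together with these two limits force $\pi^s\tilde h\equiv 0$ and $\pi^u\tilde h\equiv 0$, hence $\tilde h\equiv 0$. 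Therefore $\tilde g\equiv y$, and $g(\omega)\equiv x:=y\bmod\mathbb{Z}^2$ is constant.

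Finally, plugging the constant $g\equiv x$ back into condition (2) of Definition \ref{def1} gives $A^n x=x$, so $x\in P(A)$; and condition (3), which requires $\{x,x,\dots,x\}$ to be pairwise distinct, forces $m=1$. This yields the asserted description of $\mathcal G(\varphi_0)$.

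The step I expect to require the most care is the density/contraction argument in the third paragraph: one must be careful that the iteration is done on $\mathbb{R}/m\mathbb{Z}$ (not $\mathbb{T}$), that $n\alpha/m$ is still irrational so Weyl equidistribution applies, and that the uniform decay $(A|_{E^s})^{kn}\to 0$, $(A|_{E^u})^{-kn}\to 0$ is combined with continuity rather than with mere pointwise convergence. Once this is in place, the remaining bookkeeping — the integer-valued constants $v,c$, the invertibility of $I-A^n$, and the deduction that $m=1$ — is routine.
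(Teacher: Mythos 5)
Your proof is correct (up to a harmless sign typo: you want $y=(I-A^n)^{-1}v$, not $-(I-A^n)^{-1}v$, so that $\tilde h(\omega+n\alpha)=A^n\tilde h(\omega)$ holds), but it takes a genuinely different route from the paper's. The paper first reduces to the induced system via Lemma~\ref{claim2.1}, then argues by contradiction: it uses the density in $\mathbb{T}^2$ of the global stable and unstable manifolds $W^s(0),W^u(0)$ of the hyperbolic toral automorphism, combined with minimality of the base rotation, to show that $\mathrm{Im}(\tilde g)$ cannot contain two distinct points. You instead work directly on the universal cover: you derive $\tilde g(\omega+n\alpha)=A^n\tilde g(\omega)+v$ with $v\in\mathbb{Z}^2$, use $\det(I-A^n)\neq 0$ to kill the winding vector $c$ (hence bound $\tilde g$), normalize away $v$ to get $\tilde h(\omega+n\alpha)=A^n\tilde h(\omega)$, and then annihilate $\pi^s\tilde h$ and $\pi^u\tilde h$ by contraction along the hyperbolic splitting. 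Your approach is more elementary and self-contained — pure linear algebra plus boundedness, no appeal to the density of invariant manifolds on the torus — and it exhibits the constant value $x=y\bmod\mathbb{Z}^2\in P(A)$ explicitly rather than by contradiction. One small simplification worth noting: the Weyl-equidistribution step you flag as the delicate part can be avoided entirely. Once $\tilde h$ is bounded and $m$-periodic, writing $\pi^s\tilde h(\omega)=(A|_{E^s})^{kn}\,\pi^s\tilde h(\omega-kn\alpha)$ gives $\|\pi^s\tilde h(\omega)\|\le\|(A|_{E^s})^{kn}\|\cdot\sup\|\pi^s\tilde h\|\to 0$ as $k\to+\infty$ for every $\omega$ directly, and the symmetric estimate with $k\to-\infty$ kills $\pi^u\tilde h$; neither density nor continuity-at-a-limit-point is needed.
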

\begin{proof}
It is straightforward that $ \{graph(x): x\in P(A)\}\subseteq \mathcal{G}(\varphi_0)$. We only need to prove $\mathcal{G}(\varphi_0)\subseteq  \{graph(g): g(\omega)\equiv x, x\in P(A)\}$, which is by contradiction. 

For each $\varphi_0^{n_0}$-invariant torus $\mathcal{T}_{0}:=\{(\omega, g_{\mathcal{T}_{0}}(\omega)): \forall \omega\in \mathbb{T}\}$ of degree $m_0$, where $m_0, n_0\in \mathbb{N}$, denote the collection of all images of the multi-valued map $g_{\mathcal{T}_0}: \mathbb{T}\to \mathbb{T}^2$ by $${\rm Im}(g_{\mathcal{T}_0}):= \{g_{\mathcal{T}_0}(\omega)\in \mathbb{T}^2: \omega\in \mathbb{T}\}.$$  
Due to Definition \ref{def1}, $\mathcal{T}_0$ is a $\varphi_{0}^{n_0}$-invariant torus of degree $m_0$, where $m_0,n_0\in \mathbb{N}$, there is a continuous map $g: \mathbb{R}\to \mathbb{T}^2$, such that for any $\omega\in \mathbb{R}$, one has 
\begin{itemize}
\item[(1)] $g(\omega)=g(\omega+km_0)$ for any $k\in \mathbb{Z}$;
\item[(2)] $\varphi_0^{n_0}(\omega \mod 1, g(\omega))=(\omega+n_0\alpha \mod 1, g(\omega+n_0\alpha))$;
\item[(3)]$\{g(\omega),\dots, g(\omega+m_0-1)\}$ are pairwise distinct .
\end{itemize}
Thus, the case that  $${\rm Im}(g_{\mathcal{T}_0})=\{g_{\mathcal{T}_0}(0)\}=\{g(\omega+i): i\in \{0,\dots, m_0-1\}\}$$ has finitely many pairwise distinct values will not happen as ${\rm Im}(g_{\mathcal{T}_0})$ is path connected. Therefore, ${\rm Im}(g_{\mathcal{T}_0})$ is either path connected, which wraps around $\mathbb{T}^2$ with $m_0\geq 1$ times, or ${\rm Im}(g_{\mathcal{T}_0})$ is a singleton, which is equivalent to $\mathcal{G}(\varphi_0)\subseteq  \{graph(g): g(\omega)\equiv x, x\in P(A)\}$. 

Now, we show that the first case will not happen. For given $m_0, n_0\in \mathbb{Z}_{+}$ and a periodic continuous map $g: \mathbb{R}\to \mathbb{T}^2$ satisfying (1)-(3) as above. By Lemma \ref{claim2.1}, there is  a continuous random periodic point of period  $n_0$ of the induced homogeneous system
\begin{equation*}
\begin{aligned}
\varphi_{m_0,0}: &\mathbb{T}\times \mathbb{T}^2 \to  \mathbb{T}\times \mathbb{T}^2\\
&(\omega', x)\mapsto (R_{\alpha/m_0}(\omega'), Ax).
\end{aligned}
\end{equation*}
That is, $\tilde{g}: \mathbb{R}\to\mathbb{T}^2$ is a periodic continuous map with 
\begin{itemize}
\item[(I)] $\tilde{g}(\omega')=\tilde{g}(\omega'+k)$ for all $k\in \mathbb{Z}$;
\item[(II)] $\varphi_{m_0, 0}^{n_0}(\omega' \mod 1, \tilde{g}(\omega'))=(\omega'+n_0\frac{\alpha}{m} \mod 1, \tilde{g}(\omega'+n_0\frac{\alpha}{m}))$.
\end{itemize}

 It is clear that ${\rm Im}(\tilde{g})\setminus\{\tilde{g}(0)\}$ is path connected as $\mathbb{T}$ is path connected and $\tilde{g}$ is continuous, where $\tilde{g}$ is a continuous random periodic point of the induced homogeneous system $(\mathbb{T}\times \mathbb{T}^2,\varphi_{m_0,0})$ as  above. 
 Due to the fact that $A: \mathbb{T}^2\to \mathbb{T}^2$ is a hyperbolic linear automorphism, the global stable and unstable manifold $W^{\tau}(0), \tau=u,s$ of $0$ are orthogonal and dense in $\mathbb{T}^2$. 
 Suppose that $\sharp {\rm Im}(\tilde{g}) \geq 2$, where $\sharp$ denotes the cardinality of the corresponding subset. Then either  $({\rm Im}(\tilde{g})\setminus\{\tilde{g}(0)\}) \cap W^s(0)\not= \emptyset$ or  $({\rm Im}(\tilde{g})\setminus\{\tilde{g}(0)\}) \cap W^u(0)\not= \emptyset$.
 
The proof for these two cases are similar, we only give the proof of the first one. 
Let $\omega'\in ({\rm Im}(\tilde{g})\setminus\{\tilde{g}(0)\}) \cap W^s(0)$, as $\tilde{g}(\omega')\in W^s(0)$, we have that 
$$ \tilde{g}(\omega'+ n \frac{\alpha}{m_0})=A^n \tilde{g}(\omega')\to \tilde{g}(0) \text{ as } n\to \infty$$
Due to the fact that $\alpha\in \mathbb{R}\setminus \mathbb{Q}$, there is a subsequence $\{n_i\}_{i\in\mathbb{N}}\subseteq \mathbb{N}$ such that $$\tilde{g}(\omega'+n_i\frac{\alpha}{m_0})\to \tilde{g}(\omega') \text{ as } i\to \infty.$$
Recall the fact $\tilde{g}(\omega')\not= \tilde{g}(0)$. We have a contradiction.

In summary, for the homogeneous system $(\mathbb{T}\times \mathbb{T}^2, \varphi_0)$, one has that all invariant tori are the graphs of periodic points of the hyperbolic matrix $A\in GL(2, \mathbb{Z})$.
\end{proof}

\medskip

Secondly, we show there is a one-to-one corresponding of random periodic points between system $(\mathbb{T}\times\mathbb{T}^2,\varphi)$ and its homogeneous system $(\mathbb{T}\times\mathbb{T}^2, \varphi_0)$.
\begin{lemma}\label{lem332}
For any $n\in \mathbb{N}$, let $\mathcal{A}_0^{n}=\{\mathfrak{g}_i\}$ and   $\mathcal{A}^{n}=\{{g}_i\}$ be the set of all random periodic points of $(\mathbb{T}\times\mathbb{T}^2, \varphi_0)$ and $(\mathbb{T}\times\mathbb{T}^2, \varphi)$ of period  $n$, respectively. Then  for any $g\in \mathcal{A}^n$ and $n\in \mathbb{N}$, one has
$$ g+\mathcal{A}^n_{0}:=\{g+\mathfrak{g_i}: \mathfrak{g}_i\in \mathcal{A}^n_0\}=\mathcal{A}^n.$$
\end{lemma}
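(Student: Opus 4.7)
The plan is to exploit the affine structure of $\varphi$ and translate the random-periodic-point property into a single functional equation on $\mathbb{T}$, so that the additive relation $\mathcal{A}^{n}=g+\mathcal{A}_0^{n}$ becomes a tautology.

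First, I would unpack the definition of a random periodic point of period $n$. Iterating the fiber map $f_\omega(x)=Ax+h(\omega)$ along the base orbit shows that $g\in\mathcal{A}^{n}$ if and only if
\begin{equation*}
g(\omega+n\alpha)=A^{n}g(\omega)+\sum_{k=0}^{n-1}A^{n-1-k}h(\omega+k\alpha),\qquad \omega\in\mathbb{T},
\end{equation*}
while for the homogeneous system $\varphi_{0}$ the condition $\mathfrak{g}\in\mathcal{A}_0^{n}$ reduces to
\begin{equation*}
\mathfrak{g}(\omega+n\alpha)=A^{n}\mathfrak{g}(\omega),\qquad \omega\in\mathbb{T}.
\end{equation*}
Both equations live in $\mathbb{T}^{2}$, which is an abelian group, so addition and subtraction of such maps are well defined.

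Second, to show $g+\mathcal{A}_0^{n}\subseteq \mathcal{A}^{n}$, I would fix $g\in\mathcal{A}^{n}$ and $\mathfrak{g}\in\mathcal{A}_0^{n}$ and simply add the two displayed identities. The inhomogeneous term $\sum_{k=0}^{n-1}A^{n-1-k}h(\omega+k\alpha)$ is unaffected, and since $A^{n}(g+\mathfrak{g})(\omega)=A^{n}g(\omega)+A^{n}\mathfrak{g}(\omega)$, the sum $g+\mathfrak{g}$ satisfies the $\varphi^{n}$-equation, hence belongs to $\mathcal{A}^{n}$.

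Third, for the reverse inclusion $\mathcal{A}^{n}\subseteq g+\mathcal{A}_0^{n}$, I would fix $g\in\mathcal{A}^{n}$ and pick an arbitrary $g'\in\mathcal{A}^{n}$. Subtracting the $\varphi^{n}$-equation for $g$ from that for $g'$ cancels the $h$-sum and yields
\begin{equation*}
(g'-g)(\omega+n\alpha)=A^{n}(g'-g)(\omega),
\end{equation*}
which is exactly the defining equation for $\mathcal{A}_0^{n}$. Setting $\mathfrak{g}:=g'-g\in\mathcal{A}_0^{n}$ gives $g'=g+\mathfrak{g}\in g+\mathcal{A}_0^{n}$, which closes the two inclusions. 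I do not anticipate any genuine obstacle: the only point requiring mild care is that the functional identities hold in $\mathbb{T}^{2}$ (not on a lift), but since every step uses only addition of $\mathbb{T}^{2}$-valued maps and the $\mathbb{Z}$-linearity of $A\in GL(2,\mathbb{Z})$, no lifting issue arises.
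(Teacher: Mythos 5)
Your proposal is correct and takes essentially the same route as the paper: both expand $\varphi^{n}$ into the explicit affine iteration formula, then verify the two inclusions by adding a $\varphi_{0}$-periodic point to a $\varphi$-periodic point and by subtracting two $\varphi$-periodic points to cancel the inhomogeneous $h$-sum. The only cosmetic difference is that you phrase the condition as a functional equation on $\mathbb{T}$, while the paper writes it as an identity for $\varphi^{n}$ on $\mathbb{T}\times\mathbb{T}^{2}$.
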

\begin{proof}
Fix $n\in \mathbb{N}$, let $g_1, g_2$ be any two random periodic points of $(\mathbb{T}\times\mathbb{T}^2, \varphi)$ of period  $n$,  then $g_1-g_2$ is a  random periodic point of period  $n$ of $(\mathbb{T}\times\mathbb{T}^2, \varphi_0)$.
This is a straightforward calculation. 
For any $\omega\in \mathbb{T}$, $i=1,2$, we have \begin{eqnarray*}
\varphi^n(\omega, g_i(\omega))&=&(R^n_{\alpha}(\omega), g_i(\omega+n\alpha))\\
&=& (R^n_{\alpha}(\omega), A^ng_i(\omega)+ \sum_{k=0}^{n-1}A^{n-1-k}h\circ R^k_{\alpha}(\omega).\end{eqnarray*}
Then, 
\begin{eqnarray*}
\varphi^n_0(\omega, (g_1-g_2)(\omega))&=&(R^n_{\alpha}(\omega), A^ng_1(\omega) -A^ng_2(\omega))\\
&=&(R^n_{\alpha}(\omega), A^ng_1(\omega)+ \sum_{k=0}^{n-1}A^{n-1-k}h\circ R^k_{\alpha}(\omega)\\
&\quad& -A^ng_2(\omega) - \sum_{k=0}^{n-1}A^{n-1-k}h\circ R^k_{\alpha}(\omega)\\
&=& (R^n_{\alpha}(\omega), g_1\circ R^n_{\alpha}(\omega)-g_2\circ R^n_{\alpha}(\omega))\\
&=& (R^n_{\alpha}(\omega), (g_1-g_2)\circ R^n_{\alpha}(\omega)).
\end{eqnarray*}
Thus, \begin{equation}\label{eqn3.4.1} \forall g\in \mathcal{A}^n, \mathcal{A}^n\subseteq g+\mathcal{A}^n_0.\end{equation}

Similarly, for any $g\in \mathcal{A}^n$, $\mathfrak{g}\in \mathcal{A}^n_0$, and $\omega\in \mathbb{T}$, one has
\begin{eqnarray*}
\varphi^n(\omega,(g+\mathfrak{g})(\omega))&=& (\omega+n\alpha, A^ng(\omega)+A^n\mathfrak{g}(\omega)+ \sum_{k=0}^{n-1}A^{n-1-k}h\circ R^k_{\alpha}(\omega) \\
&=& (\omega+n\alpha, g\circ R^n_{\alpha}(\omega)+\mathfrak{g}\circ R^n_{\alpha}(\omega)).
\end{eqnarray*}
Thus, 
$$ g+\mathfrak{g}\in \mathcal{A}^n,$$
which shows that 
\begin{equation}\label{eqn3.4.2} \forall g\in \mathcal{A}^n, g+\mathcal{A}^n_0\subseteq \mathcal{A}^n.\end{equation}
Therefore, by combining \eqref{eqn3.4.1} and \eqref{eqn3.4.2}, the lemma is proved.
\end{proof}
\begin{remark}\label{rmk}
If there is a continuous random fixed point $g\in C(\mathbb{T},\mathbb{T}^2)$ of $(\mathbb{T}\times\mathbb{T}^2, \varphi)$, then due to Lemma \ref{lem332}, we have that for any $n\in \mathbb{N}$, 
$\mathcal{A}^n\subseteq C(\mathbb{T},\mathbb{T}^2),$   
which means all random periodic points of $(\mathbb{T}\times \mathbb{T}^2,\varphi)$ are continuous.
\end{remark}
Thirdly, we show the topological entropies of $(\mathbb{T}\times \mathbb{T}^2, \varphi)$ and $(\mathbb{T}\times \mathbb{T}^2,\varphi_0)$ are equal if there is a continuous random periodic point of $(\mathbb{T}\times \mathbb{T}^2, \varphi)$.
\begin{lemma}\label{prop1}
Let $(\mathbb{T}\times \mathbb{T}^2, \varphi)$ be as in \eqref{syst1} and $(\mathbb{T}\times \mathbb{T}^2, \varphi_0)$ be its homogeneous system, if there is a continuous random periodic point $g$ of period $n\in \mathbb{N}$ of $(\mathbb{T}\times \mathbb{T}^2, \varphi)$, then $(\mathbb{T}\times \mathbb{T}^2, \varphi^n)$ and $(\mathbb{T}\times \mathbb{T}^2, \varphi^n_0)$ are topological conjugate and thus $h_{top}(\varphi)=h_{top}(\varphi_0)$.
\end{lemma}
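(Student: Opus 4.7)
The plan is to mimic the translation-by-cocycle trick shown in the commented-out proof for the random fixed point case, but applied to the $n$-th iterate $\varphi^n$ rather than to $\varphi$ itself. The key point is that although $g$ is only periodic with period $n$ (so a single translation by $g$ need not intertwine $\varphi$ and $\varphi_0$), after $n$ iterations the random periodicity $\varphi^n(\omega, g(\omega)) = (\omega + n\alpha, g(\omega+n\alpha))$ expresses $g$ as a genuine cocycle solution for $\varphi^n$ over $R_{n\alpha}$.

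First I would unroll the random periodicity: iterating the skew product gives the cocycle identity
\begin{equation*}
g(\omega + n\alpha) \;=\; A^n g(\omega) \,+\, \sum_{k=0}^{n-1} A^{n-1-k}\, h\!\left(\omega + k\alpha\right),
\end{equation*}
which holds for every $\omega \in \mathbb{T}$. Next I would introduce the fibrewise translation
\begin{equation*}
T_g : \mathbb{T}\times \mathbb{T}^2 \to \mathbb{T}\times \mathbb{T}^2, \qquad (\omega,x) \mapsto (\omega, x + g(\omega)),
\end{equation*}
whose continuous inverse is $(\omega,x)\mapsto (\omega, x-g(\omega))$ thanks to the continuity of $g$ guaranteed by hypothesis, so $T_g$ is a homeomorphism of $\mathbb{T}\times\mathbb{T}^2$.

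Then I would directly verify the intertwining $\varphi^n \circ T_g = T_g \circ \varphi_0^n$ by computing both sides on an arbitrary point $(\omega,x)$: expanding the left-hand side one gets $(\omega+n\alpha,\, A^n x + A^n g(\omega) + \sum_{k=0}^{n-1} A^{n-1-k} h(\omega+k\alpha))$, while the right-hand side is $(\omega+n\alpha,\, A^n x + g(\omega+n\alpha))$; the cocycle identity above makes the second coordinates agree. This exhibits $T_g$ as a topological conjugacy between $\varphi^n$ and $\varphi_0^n$. Finally, since topological entropy is invariant under topological conjugacy and satisfies $h_{top}(\psi^n) = n\, h_{top}(\psi)$ for any continuous map $\psi$ on a compact metric space, and since $h_{top}(\varphi_0)<+\infty$ (e.g.\ because $\varphi_0$ is a factor of an Anosov diffeomorphism on $\mathbb{T}^2$ via projection), we conclude $n\, h_{top}(\varphi) = h_{top}(\varphi^n) = h_{top}(\varphi_0^n) = n\, h_{top}(\varphi_0)$ and divide by $n$.

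I do not expect any serious obstacle: the proof is essentially algebraic once the cocycle identity is written down, and the argument is the natural generalization of the random fixed point case to period $n$. The only subtlety worth flagging is that one must work with $\varphi^n$ rather than $\varphi$, because the translation $T_g$ is defined using a single $g$ (one of the maps in the random periodic orbit) and only conjugates the $n$-th iterates; the equality of entropies for $\varphi$ and $\varphi_0$ themselves then follows from the standard scaling $h_{top}(\psi^n)=n\,h_{top}(\psi)$.
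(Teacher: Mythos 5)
Your proof is correct and takes essentially the same route as the paper: the same fibrewise translation $T_g$, the same cocycle identity for $g$ under $\varphi^n$, the same verification that $T_g$ conjugates $\varphi^n$ and $\varphi_0^n$, and the same reduction via $h_{top}(\psi^n)=n\,h_{top}(\psi)$. You are slightly more explicit than the paper in spelling out the last entropy-scaling step, but the argument is identical in substance.
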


\begin{proof}[Proof of Lemma \ref{prop1}]
By assumption, there is a continuous random periodic point  $g$ of period $n\in \mathbb{N}$ of $(\mathbb{T}\times\mathbb{T}^2, \varphi)$, i.e., $g\in C(\mathbb{T},\mathbb{T}^2)$ and 
$$\varphi^n(\omega, g(\omega))= (R^n_{\alpha}(\omega), g\circ R^n_{\alpha}(\omega))=(R^n_{\alpha}(\omega), A^ng(\omega)+ \sum_{k=0}^{n-1}A^{n-1-k}h\circ R^k_{\alpha}(\omega)).$$ 
Denote 
\begin{eqnarray*}
T_{g}:  &\mathbb{T}\times \mathbb{T}^2\to \mathbb{T}\times \mathbb{T}^2 \\
&(\omega, x) \mapsto (\omega, x+g(\omega)),
\end{eqnarray*}
and 
\begin{eqnarray*}
\tilde{T}_{g}:  &\mathbb{T}\times \mathbb{T}^2\to \mathbb{T}\times \mathbb{T}^2 \\
&(\omega, x) \mapsto (\omega, x-g(\omega)).
\end{eqnarray*}
It is clear that $T_g\circ \tilde{T}_g=\tilde{T}_g\circ T_g=id$, and $T_g$, $\tilde{T}_g$ are continuous as $g$ is continuous. Thus, $T_{g}$ is a homeomorphism. 

For any $(\omega, x)\in \mathbb{T}\times \mathbb{T}^2$, we have
\begin{eqnarray*}
\varphi^n \circ T_g(\omega,x)&=&\varphi^n(\omega, x+g(\omega))\\
&=&(R^n_{\alpha}(\omega),  A^n(x+g(\omega))+ \sum_{k=0}^{n-1}A^{n-1-k}h\circ R^k_{\alpha}(\omega))\\
&=&(R^n_{\alpha}(\omega),  A^n(x)+A^ng(\omega)+\sum_{k=0}^{n-1}A^{n-1-k}h\circ R^k_{\alpha}(\omega)),
\end{eqnarray*}
and 
\begin{eqnarray*}
T_g\circ \varphi^n_0(\omega,x)=T_g(R^n_{\alpha}(\omega), A^nx)
=(R^n_{\alpha}(\omega),  A^nx+g\circ R^n_{\alpha}(\omega)).
\end{eqnarray*}
Since, $g\circ R^n_{\alpha}(\omega)=A^ng(\omega)+\sum_{k=0}^{n-1}A^{n-1-k}h\circ R^k_{\alpha}(\omega)$, 
we have $\varphi\circ T_g= T_g\circ \varphi_0$, i.e., 
\[\begin{array}{ccc}
\mb T\times \mb T^2 &
\stackrel{\varphi_0}{\longrightarrow} &
\mathbb T\times \mathbb T^2 \\
\Big\downarrow \rlap{$T_g$} & &
\Big\downarrow\vcenter{%
\rlap{$T_g$}}\\
\mathbb T\times \mathbb T^2 & \stackrel{\varphi}{\longrightarrow} &
\mathbb T\times \mathbb T^2
\end{array}.\]
Therefore, $(\mathbb{T}\times \mathbb{T}^2, \varphi^n)$ and $(\mathbb{T}\times \mathbb{T}^2, \varphi^n_0)$ are topological conjugate. Thus $$h_{top}(\varphi^n_0)=h_{top}(\varphi^n)$$ as $h_{top}(\varphi_0)<+\infty$ and the topological entropy is a topological invariant. Then, one has 
$$h_{top}(\varphi_0)=h_{top}(\varphi),$$
which completes the proof of Lemma \ref{prop1}.
\end{proof}

\begin{remark}
There is no need to prove the topological conjugacy between  $(\mathbb{T}\times \mathbb{T}^2, \varphi^n)$ and $(\mathbb{T}\times \mathbb{T}^2,\varphi^n_0)$ for some $n\in \mathbb{N}$ to get  $h_{top}(\varphi)=h_{top}(\varphi_{0})$. 
However, with the help of continuous random periodic points, we can get the topological conjugacy between these two systems straightforwardly, which may not hold without them.
\end{remark}

\section{Proof of Theorem \ref{thm1}}\label{sect4}
The proof is done by combining Proposition \ref{prop2.1.1}, Lemma \ref{lem331}, Lemma \ref{lem332} and Lemma \ref{prop1}.

Due to Proposition \ref{prop2.1.1}, there is an $m\in \mathbb{N}$ and a $\varphi$-invariant torus of degree $m$, where $m$ is the smallest positive integer satisfying $m\cdot (I-A)^{-1}\begin{pmatrix} deg(h_1)\\ deg(h_2)\end{pmatrix}\in \mathbb{Z}^2$. Moreover, due to Lemma \ref{claim2.1}, there is a continuous random fixed point $g\in C(\mathbb{T},\mathbb{T}^2)$ of the induced system 
\begin{equation*}
\begin{aligned}
\varphi_{m}: &\mathbb{T}\times \mathbb{T}^2 \to  \mathbb{T}\times \mathbb{T}^2\\
&(\omega', x)\mapsto (R_{\alpha/m}(\omega'), Ax+ h(m\omega' \mod 1)).
\end{aligned}
\end{equation*}
Here, we introduce a finite-to-one map 
\begin{eqnarray*}
\mathcal{K}_{m}:  &\mathbb{T}\times \mathbb{T}^2\to \mathbb{T}\times \mathbb{T}^2 \\
&(\omega, x) \mapsto (m\omega \mod 1, x).
\end{eqnarray*}
Then, we have the following commuting diagram, i.e., 
\[\begin{array}{ccc}
\mb T\times \mb T^2 &
\stackrel{\varphi_m}{\longrightarrow} &
\mathbb T\times \mathbb T^2 \\
\Big\downarrow \rlap{$\mathcal{K}_{m}$} & &
\Big\downarrow\vcenter{%
\rlap{$\mathcal{K}_{m}$}}\\
\mathbb T\times \mathbb T^2 & \stackrel{\varphi}{\longrightarrow} &
\mathbb T\times \mathbb T^2
\end{array}\ .\]
Thus, we have \begin{equation}
\label{eqn341}
h_{top}(\varphi)=h_{top}(\varphi_m).
\end{equation}
Since $g\in C(\mathbb{T},\mathbb{T}^2)$ is a continuous fixed point of the induced system $(\mathbb{T}\times\mathbb{T}^2, \varphi_m)$, by \eqref{eqn341} and Lemma \ref{prop1}, we have 
\begin{equation}\label{eqn342}
h_{top}(\varphi)=h_{top}(\varphi_m)=h_{top}(\varphi_{m,0}),
\end{equation} 
where $(\mathbb{T}\times \mathbb{T}^2,\varphi_{m,0})$ is the homogeneous system of the induced system $(\mathbb{T}\times \mathbb{T}^2,\varphi_{m})$.
Moreover, let $$T_{A}: \mathbb{T}^2\to \mathbb{T}^2,\ x\mapsto Ax,$$
where $A\in GL(2, \mathbb{Z})$ is a hyperbolic matrix.
By combining with the fact that $h_{top}(R_{\alpha/m})=0$, we have
\begin{equation}\label{eqn343}
h_{top}(\varphi_{m,0})= h_{top}(R_{\alpha/m})+ h_{top}(T_{A})=\lim_{n\to \infty}\frac1n \log \sharp P(A; n),
\end{equation}
where $P(A;n)$ is the collection of all periodic points of $A$ with periods less than or equal to $n$.

Now we consider the relationship between the cardinalities of the periodic points of the hyperbolic matrix $A\in GL(2, \mathbb{Z})$ and the invariant tori of $(\mathbb{T}\times \mathbb{T}^2,\varphi)$. Again due to Lemma \ref{prop2},  there is a $\varphi$-invariant torus of degree $m$, which can induce a continuous random fixed point $g\in C(\mathbb{T},\mathbb{T}^2)$ of the induced system $(\mathbb{T}\times\mathbb{T}^2, \varphi_m)$. Then, due to Lemma \ref{lem331}, Lemma \ref{lem332} and Remark \ref{rmk}, we have 
\begin{equation}\label{eqn344}
\sharp P(A; n)=\sharp \mathcal{G}(\varphi_{m,0}; n) =\sharp \cup_{k\leq n}\mathcal{A}_0^k=\sharp \cup_{k\leq n}\mathcal{A}^k=\sharp \mathcal{G}(\varphi; n, m),
\end{equation}
where $\mathcal{G}(\varphi_{m,0}; n)$ is the collection of all $
\varphi_{m,0}^k$-invariant tori with $k\leq n$, $\mathcal{A}_0^k$, $\mathcal{A}^k$ are the sets of all random periodic points with period $k$ of $(\mathbb{T}\times \mathbb{T}^2,\varphi_{m,0})$ and $(\mathbb{T}\times \mathbb{T}^2, \varphi_{m})$ as in Lemma \ref{lem332}, and $\mathcal{G}(\varphi; n, m)$ is the collection of $\varphi^k(k\leq n)$-invariant tori of degree $m$.

At the last, we show the uniqueness of $m\in \mathbb{N}$. Since $m$ is the smallest positive integer satisfying \eqref{e3}, due to the discussion in Lemma \ref{prop2}, there are continuous random periodic points of $(\mathbb{T}\times\mathbb{T}^2, \varphi_{lm})$, for all $l\in\mathbb{N}$. It is straightforward that all random periodic points of $(\mathbb{T}\times\mathbb{T}^2,\varphi_m)$ are random periodic points of $(\mathbb{T}\times\mathbb{T}^2,\varphi_{lm})$, for any $l\in \mathbb{N}$, that is 
\begin{equation}\label{eqn345}
\mathcal{A}^n_{m}\subseteq \mathcal{A}^n_{lm},\forall n,l\in \mathbb{N},
\end{equation}
where $\mathcal{A}_m^n$, $\mathcal{A}_{lm}^n$ are the sets of all random periodic points with period $n$ of $(\mathbb{T}\times \mathbb{T}^2,\varphi_{m})$ and $(\mathbb{T}\times \mathbb{T}^2, \varphi_{lm})$. 
Due to Lemma \ref{lem331}, Lemma \ref{lem332}, and the fact the homogeneous systems of $(\mathbb{T}\times\mathbb{T}^2,\varphi_{lm}),\forall l\in \mathbb{N}$ are same, we have 
\begin{equation}\label{eqn346}
\sharp P(A; n)=\sharp \mathcal{G}(\varphi_{lm,0}; n) =\sharp \cup_{k\leq n}\mathcal{A}_{lm,0}^k=\sharp \cup_{k\leq n}\mathcal{A}_{lm}^k,
\end{equation}
where $\mathcal{A}_{lm, 0}^k$ and $\mathcal{A}_{lm}^k$ are the sets of all random periodic points of $(\mathbb{T}\times \mathbb{T}^2,\varphi_{lm,0})$ and $(\mathbb{T}\times \mathbb{T}^2, \varphi_{lm})$, respectively. By combining \eqref{eqn345} and \eqref{eqn346}, one has 
\begin{equation*}\label{eqn347}
\mathcal{A}^n_{m}= \mathcal{A}^n_{lm},\forall n,l\in \mathbb{N}.
\end{equation*}
That is to say all continuous random periodic points of $(\mathbb{T}\times \mathbb{T}^2, \varphi_{lm})$ for $l\in\mathbb{N}$ are continuous random periodic points of $(\mathbb{T}\times \mathbb{T}^2, \varphi_{m})$. 
Due to (3) of Definition \ref{def1}, we only have $\varphi^n$-invariant tori of degree $m$.

By combining the above argument with \eqref{eqn342}, \eqref{eqn343} and \eqref{eqn344}, we have proven Theorem \ref{thm1}. $\hfill\square$





\medskip

\noindent{\bf Acknowledgement:}
The authors would like to thank  Wen Huang, Yi Shi, and Hui Xu for useful discussions.
Z. Lian is partially supported by NNSF of China (12090012). 
X. Ma is partially supported by NNSF of China (12471188, 12090012), USTC Research Funds of the Double First-Class Initiative, and the Fundamental Research Funds for the Central Universities. H. Zhao is partially supported by NNSF of China (123B2006).







\bibliographystyle{plain}

\begin{thebibliography}{99}
\bibitem{AKM65}
	{\sc Adler, R.L.,  Konheim, A.G. and McAndrew, M.H.}
	{\it Topological entropy.} {Trans. Amer. Math. Soc.}, {114} (1965), 309--319.



\bibitem{Bow68}
{Bowen, R.}
\newblock{Topological entropy and axiom A, in Global Analysis, Berkeley, Calif., 1968, in: Proc. Sympos. Pure Math., vols XIV-XVI 23--41,}
\newblock{\em Amer. Math. Soc., Providence RI,} 1968, pp. 23--41.

\bibitem{Bow711}
{Bowen, R.}
\newblock{Entropy for group endomorphisms and homogeneous spaces.}
\newblock{\em Trans. Amer. Math. Soc.} 153(1971), 401--414.

\bibitem{Bow712}
{Bowen, R.}
\newblock{Periodic points and measures for Axiom A diffeomorphisms. }
\newblock{\em Trans. Amer. Math. Soc.} 154(1971), 377--397.

\bibitem{Bow72}
{Bowen, R.}
\newblock{Periodic orbits for hyperbolic flows. }
\newblock{\em  Amer. J. Math.} 94(1972), 1--30.


\bibitem{HLL}
{\sc Huang, W., Lian, Z., and Lu, K.-N.}
\newblock Dynamical complexity of Anosov systems driven by a quasi-periodic forcing. 
\newblock{\em Sci. China Math.} (2023). doi:10.1007/s11425-023-2207-x







\bibitem{Kat80}
{\sc Katok, A.}
\newblock Lyapunov exponents, entropy and periodic orbits for diffeomorphisms.
\newblock {\em Inst. Hautes \'{E}tudes Sci. Publ. Math.} No. 51 (1980), 137--173.








\bibitem{Kl01}
{\sc Klünger, M.}
\newblock{Periodicity and Sharkovsky’s theorem for random dynamical systems. }
\newblock{\em Stoch. Dyn.} 1 (2001), no. 3, 299--338.

\bibitem{ZZ09}
{\sc Zhao, H.-Z. and Zheng, Z.-H.}
\newblock{Random periodic solutions of random dynamical systems.}
\newblock{\em J. Differential Equations} 246 (2009), no. 5, 2020--2038.
\end{thebibliography}

\end{document}